\documentclass[12pt, reqno]{amsart}
\usepackage{floatrow}
\usepackage{amssymb}
\usepackage{amsfonts}
\usepackage{graphicx}   
\usepackage{amsthm}
\usepackage{amsmath}
\usepackage{amscd}
\usepackage[latin2]{inputenc}
\usepackage{t1enc}
\usepackage[mathscr]{eucal}
\usepackage{indentfirst}
\usepackage{pict2e}
\usepackage{epic}
\numberwithin{equation}{section}
\usepackage{stmaryrd}
\usepackage{scalerel,stackengine}
\usepackage[margin=2.5cm]{geometry}
\usepackage{mathtools}
\usepackage{tikz-cd}
\usepackage{tikz}
\usepackage[colorlinks = true,
            linkcolor = blue,
            urlcolor  = blue,
            citecolor = blue,
            anchorcolor = blue]{hyperref}

\usepackage[titletoc]{appendix}
\usepackage[T1]{fontenc}
\usepackage[numbers,square]{natbib}

\usepackage{setspace}
%%%%%%%%%%%%%%%%%%%%%%%%%%%%%%%%%%%%%%%%%
%%%%%%%%%%%%%%%%%%%%%%%%%%%%%%%%%%%%%%%%%underbrace
\usetikzlibrary{calc}
\usetikzlibrary{decorations.pathreplacing}

% Tweak these as necessary
%
%  
% 
% 

%%%%%%%%%%%%%%%%%%%%%%%%%%%%%%%%%
%%%%%%%%%%%%%%%%%%%%%%%%%%%%%%%%%%%%%%%%%
%%%%make \cal work again%%%%%%%%%%%%%%%%%%
\makeatletter
\DeclareRobustCommand*\cal{\@fontswitch\relax\mathcal}
\makeatother
%%%%%%%%%%%%%%%%%%%%%%%%%%%%%%%%%

\theoremstyle{plain}
\newtheorem{Th}{Theorem}[section]
\newtheorem{Lemma}[Th]{Lemma}
\newtheorem{Cor}[Th]{Corollary}
\newtheorem{Prop}[Th]{Proposition}

\theoremstyle{definition}
\newtheorem{Def}[Th]{Definition}

\newtheorem{Rem}[Th]{Remark}
\newtheorem{?}[Th]{Problem}

\usepackage{dsfont}
\usepackage{graphicx}
\usepackage{times}
\usepackage{enumerate}
\usepackage{pgf,tikz}

\usepackage{tikz-cd} 
\usetikzlibrary{math}
\usetikzlibrary{arrows,decorations.markings}

\usepackage{adjustbox}

%%%%%%%%%%%%%%%%%%%%%%%%%%%%%
%%%my commands%%%%%%%%%%%%%%%%%%

%wide hat%%%%%%%%%
\stackMath
\newcommand\reallywidehat[1]{%
\savestack{\tmpbox}{\stretchto{%
  \scaleto{%
    \scalerel*[\widthof{\ensuremath{#1}}]{\kern-.6pt\bigwedge\kern-.6pt}%
    {\rule[-\textheight/2]{1ex}{\textheight}}%WIDTH-LIMITED BIG WEDGE
  }{\textheight}% 
}{0.5ex}}%
\stackon[1pt]{#1}{\tmpbox}%
}
\parskip 1ex
%%%%%%%%%%%%%%
%wide tilde%%%%%%%%%
\stackMath
\newcommand\reallywidetilde[1]{%
\savestack{\tmpbox}{\stretchto{%
  \scaleto{%
    \scalerel*[\widthof{\ensuremath{#1}}]{\kern-.6pt\sim\kern-.6pt}%
    {\rule[-\textheight/2]{1ex}{\textheight}}%WIDTH-LIMITED BIG WEDGE
  }{\textheight}% 
}{0.5ex}}%
\stackon[1pt]{#1}{\tmpbox}%
}
\parskip 1ex

%%%%%%%%%%%%%%
\def\CC{\mathbb{C}}

\def\RR{\mathbb{R}}
\def\ZZ{\mathbb{Z}}
\def\ds{\displaystyle}

\def\a{\alpha}
\def\b{\beta}
\def\g{\gamma}

\def\w{\omega}
\def\L{\Lambda}
\def\l{\lambda}
\def\ra{\rightarrow}
\def\tt{\theta}
\def\LL{\mathbb{L}}
\def\zt{\zeta}

\def\mb{\mathbf}
\def\ls{{\cal E}}

\def\fr{F}
\def\ac{\Phi}
\def\LG{{\cal L}}

\def\ll{\ell}
\def\Moduli{\overline{{\cal M}}_{k+1}(\overrightarrow{L},\overrightarrow{C},\b,J)}
\def\pr{\text{pr}}
\newcommand{\tw}[1]{#1^{\zt}}
\def\tww{(\cdot)^{\zt}}
\newcommand{\energy}[1]{E(#1)}
\def\diifeo{{\cal F}}

\newcommand{\xdashrightarrow}[2][->]{
% correct vertical setting by egreg:
% http://tex.stackexchange.com/a/59660/13304
\tikz[baseline=-\the\dimexpr\fontdimen22\textfont2\relax]{
\node[anchor=south,font=\scriptsize, inner ysep=1.5pt,outer xsep=2.2pt](x){#2};
\draw[shorten <=3.4pt,shorten >=3.4pt,dashed,#1](x.south west)--(x.south east);
}
}

%%%%%%%%%%%%%%%%%%%%%%%%%%%%%
\makeatletter

\newcommand*\bigcdot{\mathpalette\bigcdot@{1}}
\newcommand*\bigcdot@[2]{\mathbin{\vcenter{\hbox{\scalebox{#2}{$\m@th#1\bullet$}}}}}
\makeatother
\def\O{\Omega^{\bigcdot}}
\def\H{H^{\bigcdot}}
\makeatother
%%%%%%%%%%%%%%%%%%%%%%%%%%%%%%%%%%%%%%%%%%%%%%%%%%%%%%%%%%

\begin{document}
\title{Cyclic group actions on Fukaya categories and mirror symmetry}

\author{Chi Hong Chow}

\address{The Institute of Mathematical Sciences and Department of Mathematics, The Chinese University of Hong Kong, Shatin, Hong Kong}

\email{chchow@math.cuhk.edu.hk}

\author{Naichung Conan Leung}

\address{The Institute of Mathematical Sciences and Department of Mathematics, The Chinese University of Hong Kong, Shatin, Hong Kong}

\email{leung@math.cuhk.edu.hk}

\begin{abstract}  
Let $(X,\w)$ be a compact symplectic manifold whose first Chern class $c_1(X)$ is divisible by a positive integer $n$. We construct a $\ZZ_{2n}$-action on its Fukaya category $Fuk(X)$ and a $\ZZ_n$-action on the local models of its moduli of Lagrangian branes. We show that this action is compatible with the gluing functions for different local models.
\end{abstract}

\subjclass[2010]{53D37 (primary); 53D12 (secondary)} 
%\keywords{} 

\maketitle

%\tableofcontents

\section{Introduction} 
Let $(X,\w)$ be a compact symplectic manifold. We study the following objects:
\begin{enumerate}
\item the Fukaya category $Fuk(X)$ of $X$; and
\item the moduli ${\cal M}$ of Lagrangian branes on $X$ with the superpotential $W$. 
\end{enumerate}

This paper establishes results about cyclic group actions on these objects which arise from the divisibility of the first Chern class $c_1(X)$. 

\noindent \textbf{Assumption (A).} The first Chern class $c_1(X)\in H^2(X;\ZZ)$ is divisible by a positive integer $n$. 

Let us begin with (1). Let $\zt$ be a complex number. Define a \textit{$\zt$-twisted $A_{\infty}$ functor} on $Fuk(X)$, or simply a \textit{twisted $A_{\infty}$ functor}, to be an $A_{\infty}$ functor of the form
\[\ac:Fuk(X)\ra Fuk(X)_{(\zt)}\]
where $Fuk(X)_{(\zt)}$ is the $A_{\infty}$ category whose objects and morphism spaces are the same as those of $Fuk(X)$, and whose $A_{\infty}$ product $(m_{(\zt)})_k$ is defined by 
\[ (m_{(\zt)})_k := \zt^{k-2}m_k,~k\geqslant 0\]
where $m_k$ is the $A_{\infty}$ product of $Fuk(X)$. Clearly, a twisted $A_{\infty}$ functor can also be regarded as an $A_{\infty}$ functor $Fuk(X)_{(\zt^i )}\ra Fuk(X)_{(\zt^{i+1})}$ for any $i\in\ZZ$.

\begin{Th}\label{thm} Assume (A). Put $\zt =e^{\frac{2\pi i}{2n}} $. There exists a $\zt$-twisted $A_{\infty}$ functor $\ac$ on $Fuk(X)$ whose $(2n)$-th power is $A_{\infty}$ homotopic to the identity functor $\text{id}_{Fuk(X)}$.
\end{Th}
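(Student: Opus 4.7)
My plan is to realize $\ac$ as an almost trivial ``rescaling'' functor: the identity on objects, a single degree-dependent scalar on morphisms, and vanishing higher components $\ac_k=0$ for $k\geqslant 2$. Assumption~(A) enters only to provide a $\ZZ_{2n}$-grading on $Fuk(X)$; once this grading is in place, the theorem reduces to a small piece of exponent arithmetic.

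\medskip

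First I would use (A) to equip $Fuk(X)$ with a $\ZZ_{2n}$-grading. The condition $c_1(X)=n\b$ for some $\b\in H^2(X;\ZZ)$ allows one to refine the squared-phase map $\det^{2}:\text{Lag}(TX)\to U(1)$ through a $\ZZ_{2n}$-covering, so that Fukaya objects carry a lift of the phase to this cover. With this data, every Floer complex $CF(L_0,L_1)$ becomes $\ZZ_{2n}$-graded and each $m_k$ has degree $2-k$ modulo $2n$. This is the only geometric input required from (A).

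\medskip

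I would then set $\ac(L):=L$ on objects, $\ac_1(x):=\zt^{-|x|}x$ on a homogeneous morphism $x$ of degree $|x|\in\ZZ_{2n}$, and $\ac_k:=0$ for $k\geqslant 2$. The scalar is well-defined since $\zt^{2n}=1$. Because $\ac_k=0$ for $k\geqslant 2$, the only nontrivial $A_{\infty}$-functor equation to verify is
\[\ac_1\bigl(m_k(x_k,\ldots,x_1)\bigr)=(m_{(\zt)})_k\bigl(\ac_1(x_k),\ldots,\ac_1(x_1)\bigr).\]
Using $|m_k(x_{\bullet})|=2-k+\sum_i|x_i|$ the left-hand side equals $\zt^{k-2-\sum_i|x_i|}\,m_k(x_\bullet)$, while on the right-hand side $(m_{(\zt)})_k=\zt^{k-2}m_k$ together with the multilinear scalars $\zt^{-|x_i|}$ produces exactly the same expression $\zt^{k-2-\sum_i|x_i|}\,m_k(x_\bullet)$. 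Hence $\ac$ is a genuine $\zt$-twisted $A_{\infty}$-functor.

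\medskip

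Finally, the $2n$-th iterate $\ac^{2n}$ multiplies a morphism of degree $|x|$ by $\zt^{-2n|x|}=1$, so $\ac^{2n}=\text{id}_{Fuk(X)}$ strictly and in particular is $A_{\infty}$-homotopic to the identity.

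\medskip

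The main obstacle is really the first step: fixing the $\ZZ_{2n}$-grading on $Fuk(X)$, i.e.\ specifying which Lagrangian branes are admitted and which lift of the phase is used, so that $|x|\in\ZZ_{2n}$ is unambiguous and $m_k$ has the claimed degree $2-k\pmod{2n}$. Once that bookkeeping is set up, Steps~2--4 collapse to the displayed exponent arithmetic. A minor point of care is the sign in $\ac_1(x)=\zt^{-|x|}x$: the opposite sign would realize an $A_{\infty}$-functor to $Fuk(X)_{(\zt^{-1})}$ rather than to $Fuk(X)_{(\zt)}$, so the convention $(m_{(\zt)})_k=\zt^{k-2}m_k$ in the definition pins down the exponent $-|x|$.
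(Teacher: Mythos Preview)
Your argument hinges on equipping $Fuk(X)$ with a $\ZZ_{2n}$-grading, i.e.\ on each Lagrangian $L$ admitting a lift of its Gauss map $\tt_L$ to the fiberwise $\ZZ_{2n}$-cover $\LG'_X$ of the Lagrangian Grassmannian bundle. That lift exists precisely when the $\ZZ_{2n}$-local system $\ls_L$ of Section~\ref{object level} is trivial, equivalently when the Maslov class of $L$ vanishes mod $2n$. This is \emph{not} implied by Assumption~(A): (A) guarantees only that the cover $\LG'_X$ exists, not that individual Lagrangians are gradable with respect to it. Concretely, for the Clifford torus in $\CC P^2$ one has $n=3$ but minimal Maslov number $2$, so no $\ZZ_6$-grading exists; the same obstruction arises for the SYZ torus fibers that drive the applications in Section~3. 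In the self-Floer case your functor equation reads $\zt^{-(2-k-\mu(\b)+\sum|x_i|)}m_{k,\b}(x_\bullet)=\zt^{k-2-\sum|x_i|}m_{k,\b}(x_\bullet)$, which forces $\mu(\b)\equiv 0\pmod{2n}$ for every disk class --- exactly the gradability you are assuming.

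The paper's construction is the correct fix: it absorbs the obstruction $\ls_L$ into the \emph{object}, setting $\ac(L,\ls)=(L,\ls\otimes\ls_L)$, and on morphisms replaces your global scalar $\zt^{-|x|}$ by $\tw{f_C}=\zt^{-r}\cdot(\text{--}\otimes s_C)$, where $r$ is the form degree and $s_C$ is the flat section of $Hom(\ls_{L_0}|_C,\ls_{L_1}|_C)$ coming from the canonical short path. The identity $\zt^a=\zt^{\mu(\b)}$ in \eqref{2} then supplies exactly the factor that your degree arithmetic would have produced had a grading existed. When $\ls_L$ is trivial one may write $s_C=\zt^{d_C}$ for the grading shift $d_C$, and the paper's $\tw{f_C}$ collapses to your $\zt^{-(r+d_C)}=\zt^{-|x|}$; so your argument is the gradable special case, but the theorem as stated --- and its use in Proposition~\ref{localaction}, where the nontrivial holonomies $\g_i$ of $\ls_L$ generate the $\ZZ_n$-action on the mirror --- require the general construction.
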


Next we consider (2). Recall that ${\cal M}$ comes with the \textit{superpotential} $W$ which arises from the effect by quantum corrections. It is known that local models for $({\cal M},W)$ are the \textit{weak Maurer-Cartan schemes} ${\cal M}_{weak}(L)$ associated to Lagrangians $L$ of $X$ \cite{CO, FOOO, FOOO toric}. The deformed $m_0$ of each $\mb{b}\in {\cal M}_{weak}(L)$ is by definition equal to the unit class of $CF(L,L)$ multiplied by a constant which is the value of $W$ evaluated at $\mb{b}$.

\begin{Prop}\label{localaction} Assume (A), then there is a $\ZZ_n$-action on $({\cal M}_{weak}(L),W)$, i.e.  
\[W(\tau \cdot \mb{b})=e^{\frac{2\pi i}{n}} W(\mb{b})\text{ for any }\mb{b}\in{\cal M}_{weak}(L) \]
where $\tau$ is a generator of the $\ZZ_n$-action on ${\cal M}_{weak}(L)$.
\end{Prop}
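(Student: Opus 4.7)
The plan is to use the twisted $A_\infty$ functor $\Phi$ supplied by Theorem \ref{thm} to transport the cyclic symmetry onto each local model ${\cal M}_{weak}(L)$, by combining the push-forward of weak Maurer-Cartan elements along $\Phi$ with a rescaling that identifies the weak MC scheme of $Fuk(X)_{(\zeta)}$ with that of $Fuk(X)$.

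The first ingredient is general. For any unital $A_\infty$ functor $F : \mathcal{A} \to \mathcal{B}$ and any weak MC element $\mathbf{b}$ in $\mathcal{A}$, the element
\[
F_*(\mathbf{b}) := \sum_{k \geq 0} F_k(\mathbf{b}^{\otimes k})
\]
is weak MC in $\mathcal{B}$ and satisfies $W_\mathcal{B}(F_* \mathbf{b}) = W_\mathcal{A}(\mathbf{b})$; the verification is a direct calculation from the $A_\infty$-functor equations evaluated on $\mathbf{b}^{\otimes \bullet}$, using $F_1(e_L) = e_{F(L)}$. Specializing to $F = \Phi$ (arranging, as is natural for the local construction, that $\Phi$ preserves the object $L$) gives a potential-preserving map $\Phi_* : {\cal M}_{weak}(L) \to {\cal M}_{weak}(L)_{(\zeta)}$.

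The second ingredient is a rescaling bijection. From $(m_{(\zeta)})_k = \zeta^{k-2} m_k$ a direct check shows that $\mathbf{b}' \mapsto \zeta \mathbf{b}'$ sends weak MC elements of $Fuk(X)_{(\zeta)}$ to weak MC elements of $Fuk(X)$ and multiplies the potential by $\zeta^2$. Composing with $\Phi_*$ I would define
\[
\tau(\mathbf{b}) := \zeta \cdot \Phi_*(\mathbf{b}) \, : \, {\cal M}_{weak}(L) \longrightarrow {\cal M}_{weak}(L),
\]
for which the equivariance $W(\tau \mathbf{b}) = \zeta^2 W(\mathbf{b}) = e^{2\pi i / n} W(\mathbf{b})$ is immediate.

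Finally, to check that $\tau$ generates a $\ZZ_n$-action one iterates, viewing $\Phi$ at each step as $Fuk(X)_{(\zeta^i)} \to Fuk(X)_{(\zeta^{i+1})}$; then $\tau^j$ is the push-forward by the $j$-fold composition $\Phi^j$ followed by the rescaling $\cdot\,\zeta^j$. Theorem \ref{thm} supplies $\Phi^{2n} \simeq \text{id}$ and $\zeta^{2n} = 1$, so $\tau^{2n} \simeq \text{id}$ at the level of weak MC schemes (up to gauge) is automatic. The main obstacle I anticipate is the sharper statement $\tau^n \simeq \text{id}$, i.e., passing from a $\ZZ_{2n}$-symmetry at the graded $A_\infty$ level to the $\ZZ_n$-symmetry on the weak MC scheme asserted in the proposition: this requires analyzing $\Phi^n : Fuk(X) \to Fuk(X)_{(-1)}$ and checking that the induced push-forward on ${\cal M}_{weak}(L)_{(-1)}$, combined with the scalar $\zeta^n = -1$ under the identification ${\cal M}_{weak}(L)_{(-1)} \cong {\cal M}_{weak}(L)$, yields the identity. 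Once this cancellation is in hand, $\tau$ descends to the required $\ZZ_n$-action on $({\cal M}_{weak}(L), W)$.
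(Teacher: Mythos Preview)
Your approach is conceptually close to the paper's, but there is a real gap in the step where you write ``arranging, as is natural for the local construction, that $\Phi$ preserves the object $L$''. In the paper's construction $\Phi$ does \emph{not} preserve the object: $\Phi(L,\ls)=(L,\ls\otimes\ls_L)$, and $\ls_L$ is typically nontrivial. Consequently your pushforward $\Phi_*(\mathbf{b})$ lands in ${\cal M}_{weak}((L,\ls_L))_{(\zeta)}$, not in ${\cal M}_{weak}(L)_{(\zeta)}$, and after your rescaling by $\zeta$ you are still in ${\cal M}_{weak}((L,\ls_L))$ rather than ${\cal M}_{weak}(L)$. The missing step is the identification ${\cal M}_{weak}((L,\ls_L))\simeq {\cal M}_{weak}(L)$ coming from the divisor axiom: absorbing the local system $\ls_L$ into the degree-one part of the bounding cochain amounts to the translation $y_i\mapsto \gamma_i y_i$, where $\gamma_i=\gamma(e_i^{\vee})$ is the holonomy of $\ls_L$. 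Without this step your $\tau$ would act trivially on the $H^1$ factor (since $\zeta\cdot\zeta^{-1}=1$ there), which is wrong; with it, your $\tau$ becomes exactly the paper's
\[
\tau(y_1,\ldots,y_\ell,b_{>1})=\bigl(\gamma_1 y_1,\ldots,\gamma_\ell y_\ell,\ \zeta\,\tw{\text{id}}(b_{>1})\bigr).
\]

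The paper bypasses the functorial packaging entirely: it writes down this explicit formula and verifies $P(\tau(\mathbf{b}))=\zeta^2\,\tw{\text{id}}(P(\mathbf{b}))$ by a direct computation using $\gamma(\partial\beta)=\zeta^{\mu(\beta)}$ and the degree formula $\deg m^J_{k,\beta}(b_{i_1},\ldots,b_{i_k})=i_1+\cdots+i_k+2-k-\mu(\beta)$. This also makes the order-$n$ statement you flagged as an obstacle immediate: since $L$ is oriented, $\ls_L$ is reduced to a $\ZZ_n$-local system (Remark~\ref{order=n}), so $\gamma_i^n=1$; and on $b_{>1}\in H^{\text{odd}>1}$ the map $\zeta\,\tw{\text{id}}$ acts in degree $d$ by $\zeta^{1-d}$ with $1-d$ even, hence $(\zeta^{1-d})^n=1$. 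No analysis of $\Phi^n$ or of $Fuk(X)_{(-1)}$ is needed.
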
 

In order to give a reasonable structure on ${\cal M}$, one has to define gluing functions between the weak Maurer-Cartan schemes associated to two different Lagrangians. This problem has been studied by a lot of people \cite{Auroux2, CLL, CHL1, CHL3, Futrick, HKL, KS, Pascal, Seidel1}. In this paper, we consider the following effective approach by Fukaya \cite{Futrick} whose idea is now known as the \textit{Fukaya's trick}. For any two Lagrangians $L$ and $L'$ which can be brought from one to the other by an isotopy $\varphi_t$. Fix an $\w$-tame almost complex structure $J$. Then the count of $(\varphi_t^{-1})_*J$-holomorphic disks bounding $L$ yields the desired gluing function 
\begin{equation}\label{wall-crossing}
\Psi_{L,L'}: {\cal M}_{weak}(L) \dasharrow {\cal M}_{weak}(L')
\end{equation}
where the dash arrow indicates that this function is defined only on an open subset of the domain. See Section \ref{recall} for more detail.

\begin{Prop}\label{commutewithwallcrossing} The $\ZZ_n$-actions on ${\cal M}_{weak}(L)$ and ${\cal M}_{weak}(L')$ commute with $\Psi_{L,L'}$ in \eqref{wall-crossing}.
\end{Prop}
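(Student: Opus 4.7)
The plan is to view $\Psi_{L,L'}$ as the composition of two natural ingredients supplied by Fukaya's trick --- pushforward by $\varphi_1$ and an $A_{\infty}$ continuation comparing two almost complex structures on $L$ --- and to verify that each of them intertwines the $\ZZ_n$-action of Proposition \ref{localaction}. The key point is that this action is built canonically from Maslov-index-graded disk data, and under assumption (A) both ingredients of $\Psi_{L,L'}$ preserve this grading.

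More precisely, the action $\tau$ will be constructed in the proof of Proposition \ref{localaction} out of the $\zt$-twisted functor $\ac$ of Theorem \ref{thm}, by evaluating $\ac$ on a weak Maurer--Cartan element and rescaling: schematically, $\tau\cdot\mb{b}=\zt\,\ac(\mb{b})$, where the prefactor $\zt$ converts a weak MC element of $Fuk(X)_{(\zt)}$ back into one of $Fuk(X)$. The construction of $\ac$ uses only the disk-counting data on $L$ together with the $\ZZ/2n$-grading on disk classes given by the Maslov index modulo $2n$, which is well defined by (A). The pushforward $\varphi_1:X\ra X$ is a symplectic diffeomorphism, so it sends a disk class $\b\in\pi_2(X,L)$ to $(\varphi_1)_*\b\in\pi_2(X,L')$ of the same Maslov index, and hence induces a strict isomorphism of Fukaya algebras that commutes with the canonical construction of $\ac$. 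The continuation part of $\Psi_{L,L'}$, obtained from counting disks for a family of almost complex structures interpolating $J$ and $(\varphi_1^{-1})_*J$ with boundary on $L$, decomposes over disk classes of fixed Maslov index along the interpolation, and so also intertwines $\ac$. Combining the two, $\Psi_{L,L'}$ intertwines the $\tau$-actions, which passes to the induced maps on weak MC schemes.

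The main obstacle will be to make this naturality precise at the level of $A_{\infty}$ homotopy. The construction of $\ac$ in the proof of Theorem \ref{thm} will depend on auxiliary choices such as perturbation data and trivialisations tied to the $n$-fold divisibility of $c_1(X)$, and to commute with Fukaya's trick one has to make such choices coherently along the one-parameter family of almost complex structures used there, so that the two endpoint copies of $\ac$ are connected by an $A_{\infty}$ homotopy through $\zt$-twisted functors defined along the interpolation. Once this coherent choice is arranged, the equality $\tau\circ\Psi_{L,L'}=\Psi_{L,L'}\circ\tau$ follows formally from the functoriality of passing to weak MC schemes.
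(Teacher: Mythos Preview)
Your decomposition of $\Psi_{L,L'}$ into the diffeomorphism pushforward and the continuation map from a family of almost complex structures is exactly the paper's strategy, and your reason for why each piece should commute with $\tau$ --- that both respect the Maslov-index grading on disk classes, which is what $\tau$ is built from --- is the correct underlying mechanism. So the plan is right and matches the paper.

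Where you diverge from the paper is in your final paragraph. The ``main obstacle'' you identify is illusory. In the paper, $\tau$ is not defined via an abstract incarnation of $\ac$ that depends on perturbation data or trivialisations; it is given by an explicit formula on coordinates,
\[
\tau(y_1,\ldots,y_\ell,b_{>1})=(\gamma_1 y_1,\ldots,\gamma_\ell y_\ell,\zt\,\tw{\mathrm{id}}(b_{>1})),
\]
where $\gamma_i$ encodes the holonomy of the fixed $\ZZ_{2n}$-local system $\ls_L$ and $\tw{\mathrm{id}}$ is the grading operator. Commutation with the continuation map $\fr^{\cal J}_*$ is then a one-line computation using only that $\fr^{\cal J}_{k,\b}$ has degree $\sum i_j+1-k-\mu(\b)$ and that $\gamma(\partial\b)=\zt^{\mu(\b)}$; no coherent family of twisted functors or $A_\infty$ homotopies is needed. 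Commutation with the pushforward $\psi$ reduces to the observation that $\ls_L$ is invariant under Lagrangian isotopy --- this is the correct precise form of your ``$\varphi_1$ preserves Maslov indices,'' but note that $\varphi_1$ is only a diffeomorphism, not symplectic, so the argument goes through the Gauss map rather than through symplectic invariance.

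In short: drop the homotopy-coherence worry, and replace it with the explicit degree computation for $\fr^{\cal J}_{k,\b}$ analogous to the one in Lemma \ref{actiononlocalmirror}. That is the entire content of the proof.
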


In other words, our $\ZZ_n$-action on each $({\cal M}_{weak}(L),W)$ is compatible with the gluing of these local models. Hence, we obtain

\begin{Th} There exists a $\ZZ_n$-action on $({\cal M},W)$.
\end{Th}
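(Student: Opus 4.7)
The plan is to construct the global action by gluing the local $\ZZ_n$-actions provided by Proposition \ref{localaction} along the wall-crossing maps $\Psi_{L,L'}$ of \eqref{wall-crossing}, with the required compatibility coming directly from Proposition \ref{commutewithwallcrossing}.

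First, following the discussion preceding \eqref{wall-crossing}, I would regard ${\cal M}$ as the space obtained by assembling the charts $\{{\cal M}_{weak}(L)\}_L$ indexed by Lagrangian branes $L\subset X$, glued via the partially defined maps $\Psi_{L,L'}$, and equipped with the globally defined superpotential $W$ whose restriction to each chart agrees with the local superpotential of $({\cal M}_{weak}(L),W)$.

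Next, for each Lagrangian $L$, let $\tau_L$ denote the generator of the $\ZZ_n$-action on $({\cal M}_{weak}(L),W)$ furnished by Proposition \ref{localaction}, so that $W(\tau_L\cdot\mb{b})=e^{\frac{2\pi i}{n}}W(\mb{b})$ for every $\mb{b}\in{\cal M}_{weak}(L)$ and $\tau_L^n=\text{id}$. By Proposition \ref{commutewithwallcrossing}, the relation $\Psi_{L,L'}\circ\tau_L=\tau_{L'}\circ\Psi_{L,L'}$ holds wherever both sides are defined, so the family $\{\tau_L\}_L$ respects the gluing data and descends to a well-defined automorphism $\tau:{\cal M}\to{\cal M}$. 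The identities $W\circ\tau=e^{\frac{2\pi i}{n}}W$ and $\tau^n=\text{id}_{\cal M}$ can then be verified chart-by-chart from the corresponding local statements.

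I expect the main obstacle, such as it is, to be the bookkeeping involved in synthesizing $\tau$ from the partially defined $\tau_L$'s: on overlaps involving more than two charts, several wall-crossing maps are simultaneously relevant and one must confirm that the local compatibilities assemble into a genuine cocycle. This, however, reduces to the pointwise identity of Proposition \ref{commutewithwallcrossing} applied on the open subset where all the relevant $\Psi$'s are defined, and no new geometric input beyond Propositions \ref{localaction} and \ref{commutewithwallcrossing} should be required. The remaining difficulty would be to pin down the precise category in which ${\cal M}$ is realized so that the gluing of the $\tau_L$'s is literally a morphism in that category, but this is a formal rather than substantive issue.
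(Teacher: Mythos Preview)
Your proposal is correct and follows exactly the paper's approach: the theorem is presented there as an immediate consequence (``Hence, we obtain'') of Propositions \ref{localaction} and \ref{commutewithwallcrossing}, with the global action on $({\cal M},W)$ obtained by gluing the local $\ZZ_n$-actions along the wall-crossing maps. Your write-up simply makes explicit the bookkeeping that the paper leaves implicit.
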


As an example, assume $X$ is K\"ahler and has an anticanonical divisor $D$. Consider an SYZ fibration defined on the complement $X-D$, i.e. a special Lagrangian torus fibration with singularities \cite{SYZ}. By gluing the local models associated to the smooth Lagrangian torus fibers based on the Fukaya's trick, the moduli ${\cal M}$ of these Lagrangian tori can thus be given the structure of an analytic variety\footnote{In general, ${\cal M}$ is a \textit{rigid analytic space}, a notion first brought by Kontsevich and Soibelman \cite{KS} into the picture of mirror symmetry.}. The details can be found in the work of Tu \cite{Tu} under the assumption $W=0$ and the recent work by Yuan \cite{Yuan} for the general case. See also the work of Abouzaid \cite{Ab1,Ab2,Ab3} using another approach. In this case $({\cal M},W)$ is the (uncompactified) SYZ mirror of the pair $(X,D)$ which we denote by $(\check{X}^{\circ},W)$.

\begin{Cor}\label{cor} There exists a $\ZZ_n$-action on $(\check{X}^{\circ},W)$.
\end{Cor}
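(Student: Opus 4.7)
The plan is to recognize Corollary \ref{cor} as a direct specialization of the preceding theorem: the paragraph above the corollary identifies $(\check{X}^{\circ},W)$ with the moduli $({\cal M},W)$ obtained by gluing, via Fukaya's trick, the local models ${\cal M}_{weak}(L_b)$ attached to the smooth Lagrangian torus fibers $L_b$ of the SYZ fibration on $X-D$. So the task reduces to assembling the $\ZZ_n$-actions provided chartwise by Proposition \ref{localaction} into a single analytic automorphism of $\check{X}^{\circ}$ that scales $W$ by $e^{2\pi i/n}$.

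First I would fix the SYZ fibration on $X-D$ and, for every point $b$ in the smooth locus of the base, write $L_b$ for the corresponding Lagrangian torus fiber. The atlas underlying the construction of Tu and Yuan consists of the charts ${\cal M}_{weak}(L_b)$ glued along open subsets by the wall-crossing maps $\Psi_{L_b,L_{b'}}$ of \eqref{wall-crossing}, with the global superpotential defined by patching the local superpotentials $W|_{{\cal M}_{weak}(L_b)}$. Proposition \ref{localaction} equips each chart with a generator $\tau_b$ of a $\ZZ_n$-action satisfying $W\circ\tau_b=e^{2\pi i/n}\cdot W$, and Proposition \ref{commutewithwallcrossing} supplies the crucial compatibility
\[
\Psi_{L_b,L_{b'}}\circ\tau_b = \tau_{b'}\circ\Psi_{L_b,L_{b'}}
\]
on the common domain of definition. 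This is exactly the cocycle condition required to glue the local automorphisms into a single self-map $\tau$ of the ringed space $\check{X}^{\circ}$, and the chartwise identity $\tau_b^n=\text{id}$ forces $\tau^n=\text{id}$ globally. The scaling relation $W\circ\tau=e^{2\pi i/n}\cdot W$ is likewise preserved by the gluing, being valid on every member of an open cover.

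The genuine content is carried entirely by the two propositions, so there is no new symplectic input required at this stage. The only step which is not purely formal is to check that a compatible family $\{\tau_b\}$ of isomorphisms of charts descends to a morphism of the rigid-analytic structure of $\check{X}^{\circ}$ produced by the Tu/Yuan procedure, but this is a standard property of that atlas construction rather than a new obstacle. I therefore expect the proof to consist essentially of a one-sentence invocation of Propositions \ref{localaction} and \ref{commutewithwallcrossing} combined with the SYZ-mirror construction reviewed immediately before the statement of the corollary.
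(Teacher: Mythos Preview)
Your proposal is correct and matches the paper's approach exactly: the paper does not even supply a proof, since the corollary is an immediate specialization of the preceding theorem once $(\check{X}^{\circ},W)$ is identified with the glued moduli $({\cal M},W)$ described in the paragraph before the statement. Your expansion into a chartwise gluing via Propositions~\ref{localaction} and~\ref{commutewithwallcrossing} is precisely how that theorem is obtained, so nothing further is needed.
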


\begin{Rem}\label{Riem} If $X$ is Fano, then $(\check{X}^{\circ},W)$ is usually defined over $\CC$ and can be compactified to the complete mirror $(\check{X},W)$ which is an affine variety by adjoining a codimension-two subvariety (those points arising from the singular fibers). We point out that in this case our $\ZZ_n$-action on $(\check{X}^{\circ},W)$ can be extended to a $\ZZ_n$-action on the complete mirror $(\check{X},W)$ by the \textit{second Riemann extension theorem}\footnote{It states that if $U$ is an open subset of a normal complex analytic variety $Y$ whose complement $Y-U$ can be locally covered by closed analytic subvarieties of codimension at least two, then every holomorphic function on $U$ extends to a unique holomorphic function on $Y$.}. See Section 4 for more detail.
\end{Rem}

\begin{Rem} As pointed out by Kuznetsov and Smirnov \cite{Ku, Ku2}, the existence of a $\ZZ_n$-action on $(\check{X},W)$ may be interpreted, via the homological mirror symmetry \cite{Kont}, as the mirror of the existence of a $\ZZ_n$-action on the \textit{Lefschetz decomposition} of the derived category $D^b(X)$ of coherent sheaves on $X$. Our results give an A-side interpretation of this phenomenon.
\end{Rem}

This paper is organized as follows. In Section 2, we prove Theorem \ref{thm}. In Section 3, we recall the Fukaya's trick and the definition of ${\cal M}_{weak}(L)$, and prove Proposition \ref{localaction} and \ref{commutewithwallcrossing}. In Section 4, we fill in the details for the claim made in Remark \ref{Riem}.

%%%%%%%%%%%%%%%%%%%%%%%%%%%%%%%%%%%%%%%%%%%%%%%%%%%%%%%%%%%%%%%%%%%%%%
%%%%%%%%%%%%%%%%%%%%%%%%%%%%%%%%%%%%%%%%%%%%%%%%%%%%%%%%%%%%%%%%%%%%%%
%%%%%%%%%%%%%%%%%%%%%%%%%%%%%%%%%%%%%%%%%%%%%%%%%%%%%%%%%%%%%%%%%%%%%%
%%%%%%%%%%%%%%%%%%%%%%%%%%%%%%%%%%%%%%%%%%%%%%%%%%%%%%%%%%%%%%%%%%%%%%
%%%%%%%%%%%%%%%%%%%%%%%%%%%%%%%%%%%%%%%%%%%%%%%%%%%%%%%%%%%%%%%%%%%%%%

\section*{Acknowledgements}
We thank Yong-Geun Oh for drawing our attention to \cite{FuGalois, FOOOAn, Oh}. C. H. Chow also thanks Kaoru Ono for useful comments and for teaching him a lot from the monumental book \cite{FOOO}, Cheuk Yu Mak and Weiwei Wu for helpful discussions as well as Junwu Tu and Siu-Cheong Lau for useful email conversations.

This research was supported by grants of the Hong Kong Research Grants Council (Project No. CUHK14301117 \& CUHK14303518) and a direct grant from the Chinese University of Hong Kong (Project No. 4053337).
%%%%%%%%%%%%%%%%%%%%%%%%%%%%%%%%%%%%%%%%%%%%%%%%%%%%%%%%%%%%%%%%%%%%%%
%%%%%%%%%%%%%%%%%%%%%%%%%%%%%%%%%%%%%%%%%%%%%%%%%%%%%%%%%%%%%%%%%%%%%%
%%%%%%%%%%%%%%%%%%%%%%%%%%%%%%%%%%%%%%%%%%%%%%%%%%%%%%%%%%%%%%%%%%%%%%
%%%%%%%%%%%%%%%%%%%%%%%%%%%%%%%%%%%%%%%%%%%%%%%%%%%%%%%%%%%%%%%%%%%%%%
%%%%%%%%%%%%%%%%%%%%%%%%%%%%%%%%%%%%%%%%%%%%%%%%%%%%%%%%%%%%%%%%%%%%%%
\section{Action on \texorpdfstring{$Fuk(X)$}{}}\label{actonFuk} 
In this paper, we will not work with a particular version of Fukaya category because, as we will see, our twisted $A_{\infty}$ functor $\ac$ modifies only the local system carried by each object, and hence it does not depend on which approach adopted to handle the issues arising from the moduli spaces of holomorphic disks. We believe that readers can easily apply our ideas to the version they are using. 

Nevertheless, we will recall in Appendix \ref{Fuk(X)} the least amount of features of $Fuk(X)$ which are necessary in order to explain the construction of $\ac$. For example, the objects of $Fuk(X)$ consist of $\LL=(L,\ls)$ where $L$ is an immersed Lagrangian of $X$ with clean self-intersection and $\ls$ is a $\CC^{\times}$-local system on $L$, and the morphism space between two cleanly intersecting objects $\LL_i=(L_i,\ls_i),~i=0,1$ is defined by
\begin{equation}
Hom_{Fuk(X)}(\LL_0,\LL_1):=\bigoplus_{C\in \pi_0(L_0\times_{\iota} L_1)} \O(C;Hom(\ls_0|_C,\ls_1|_C))
\end{equation}
where $\O(C;\ls)$ is any of the standard models (de Rham, singular cochain, etc) of the cohomology group $H^{\bigcdot}(C;\ls)$ with local coefficient $\ls$. 

Here is the idea of the construction of $\ac$. We assign to each Lagrangian $L$ of $X$ a particular $\ZZ_{2n}$-local system $\ls_L$ and to each connected component $C$ of the fiber product $L_0\times_{\iota} L_1$ a flat section $f_C$ of $Hom(\ls_{L_0}|_C,\ls_{L_1}|_C)$. Then $\ls_L$ and $f_C$ will contribute to the object and morphism parts of $\ac$ respectively. The key is to show that $\ac$ satisfies the twisted version of $A_{\infty}$ equations.

\begin{Rem} The idea of twisting objects by particular $\CC^{\times}$-local systems has been used by Fukaya in his early work \cite{FuGalois}. They are the restriction of a prequantum line bundle on the ambient manifold. See also \cite{FOOOAn} and \cite{Oh} for other applications of these local systems. Recently, Auroux and Smith \cite{AS} constructed group actions on the Fukaya categories of Riemann surfaces using ambient local systems.

We point out that our local system $\ls_L$ is different from theirs, as it cannot be extended to an ambient one and it is equal to Fukaya's one only when $X$ and $L$ are \textit{monotone}. 
\end{Rem}

\begin{Rem}
Regarding group actions on Fukaya categories, we also mention the work \cite{CH} of Cho and Hong. But their action is not constructed in the above fashion, i.e. twisting objects by local systems.
\end{Rem}
%%%%%%%%%%%%%%%%%%%%%%%%%%%%%%%%%%%%%%%%%%%%%%%%%%%%%%%%%%%%%%%%%%%%%%
%%%%%%%%%%%%%%%%%%%%%%%%%%%%%%%%%%%%%%%%%%%%%%%%%%%%%%%%%%%%%%%%%%%%%%
\subsection{Object level}\label{object level} Let $m=\frac{1}{2}\text{dim}(X)$. Notice that the unitary group $U(m)$ admits a unique $\ZZ_n$-covering group $\reallywidetilde{U(m)}$, i.e. there is a short exact sequence
\[0\ra \ZZ_n \ra \reallywidetilde{U(m)}\ra U(m)\ra 1.\]

Recall we have imposed the condition that $c_1(X)$ is divisible by $n$. This means that the structure group of the frame bundle of $X$ is reduced from $U(m)$ to $\reallywidetilde{U(m)}$. Let $LG(m)$ be the Lagrangian Grassmannian of $(\CC^m,\w_{std})$ which has fundamental group $\ZZ$. We know that
\[LG(m)\simeq U(m)/O(m)\]
is a symmetric space of $U(m)$, or $U(m)/\{\pm 1\}$. As $\reallywidetilde{U(m)}$ is a $\ZZ_{2n}$-cover of $U(m)/\{\pm 1\}$, it acts on the $\ZZ_{2n}$-cover $LG'(m)$ of $LG(m)$. Since the reduced frame bundle has structure group $\reallywidetilde{U(m)}$, we have

\begin{Lemma} \cite[Lemma 2.2]{Seidel2} The Lagrangian Grassmannian bundle $\LG_X:=LG(TX,\w)$ on $X$ admits a fiberwise cover $\LG'_X\ra\LG_X$ with deck transformation group isomorphic to $\ZZ_{2n}$. 
\end{Lemma}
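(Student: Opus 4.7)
The plan is to realize $\LG'_X$ as an associated bundle to the reduced frame bundle. First I describe the fiber model. The squared-determinant map $\det^2:LG(m)\ra S^1$ is well-defined because $\det(k)^2=1$ for $k\in O(m)$, and it generates $H^1(LG(m);\ZZ)\cong\ZZ$ (the Maslov class). The connected $\ZZ_{2n}$-cover $LG'(m)\ra LG(m)$ is then the pullback of the $2n$-fold cover $z\mapsto z^{2n}$ of $S^1$; concretely,
\[LG'(m)=\{(L,z)\in LG(m)\times S^1:z^{2n}=\det^2(L)\},\]
with deck generator $(L,z)\mapsto (L,\zt z)$ for $\zt=e^{2\pi i/(2n)}$.

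Next I lift the $U(m)$-action on $LG(m)$ to a $\reallywidetilde{U(m)}$-action on $LG'(m)$. Using the explicit presentation
\[\reallywidetilde{U(m)}=\{(g,w)\in U(m)\times S^1:w^n=\det(g)\},\]
I set $(g,w)\cdot(L,z):=(g\cdot L,wz)$. The computation $(wz)^{2n}=w^{2n}z^{2n}=\det(g)^2\det^2(L)=\det^2(g\cdot L)$ shows this lands in $LG'(m)$, and the action visibly commutes with the $\ZZ_{2n}$ deck action and covers the $U(m)$-action on $LG(m)$ via the quotient $\reallywidetilde{U(m)}\ra U(m)$.

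Finally, let $\widetilde{P}$ be the $\reallywidetilde{U(m)}$-reduction of the unitary frame bundle $P$ of $(TX,\w)$ furnished by Assumption (A). I define
\[\LG'_X:=\widetilde{P}\times_{\reallywidetilde{U(m)}}LG'(m).\]
Because the $\ZZ_{2n}$ deck action on $LG'(m)$ commutes with $\reallywidetilde{U(m)}$, it descends to a free, fiberwise $\ZZ_{2n}$-action on $\LG'_X$, whose quotient is
\[\widetilde{P}\times_{\reallywidetilde{U(m)}}LG(m)=(\widetilde{P}/\ZZ_n)\times_{U(m)}LG(m)=P\times_{U(m)}LG(m)=\LG_X,\]
which is the desired fiberwise $\ZZ_{2n}$-cover.

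The main obstacle beyond bookkeeping is a single conceptual check: that the $\ZZ_n$-cover $\reallywidetilde{U(m)}\ra U(m)$ (classified by $n\ZZ\subset\pi_1(U(m))=\ZZ$) yields, when composed with $U(m)\ra U(m)/\{\pm I\}$, a $\ZZ_{2n}$-cover of $U(m)/\{\pm I\}$, matching the $\ZZ_{2n}$-cover $LG'(m)\ra LG(m)$ on which $\reallywidetilde{U(m)}$ can act. This follows because the induced map $\pi_1(U(m))\ra\pi_1(U(m)/\{\pm I\})=\ZZ$ is multiplication by $2$, so $n\ZZ$ upstairs maps to $2n\ZZ$ downstairs. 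Once this comparison is in place, the construction above delivers the lemma.
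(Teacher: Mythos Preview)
Your proof is correct and follows the same associated-bundle strategy the paper sketches in the paragraph preceding the lemma (which then cites Seidel): reduce the structure group to $\reallywidetilde{U(m)}$, lift the action to the $\ZZ_{2n}$-cover $LG'(m)$, and form the associated bundle. Your explicit models for $LG'(m)$ and $\reallywidetilde{U(m)}$ and the direct verification $(wz)^{2n}=\det^2(gL)$ make the argument self-contained, which the paper does not do.

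One small caveat: the $\pi_1$ assertion in your final paragraph is not literally correct when $m$ is even, since then $-I\in SU(m)$ and $\pi_1(U(m)/\{\pm I\})\cong\ZZ\times\ZZ_2$ rather than $\ZZ$ (so $\reallywidetilde{U(m)}\to U(m)/\{\pm I\}$ need not be a \emph{cyclic} $\ZZ_{2n}$-cover). Fortunately this paragraph is unnecessary: your explicit action $(g,w)\cdot(L,z)=(gL,wz)$ already shows directly that $\reallywidetilde{U(m)}$ acts on $LG'(m)$ commuting with the $\ZZ_{2n}$ deck action and covering the $U(m)$-action on $LG(m)$, which is all the associated-bundle construction requires.
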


Let $\iota: L \ra X$ be a Lagrangian immersion with clean self-intersection. The \textit{Gauss map} of $L$ is a section $\tt_L$ of $\iota^*\LG_X$ defined by
\begin{equation}\label{Lagsection}
\tt_L(x):=d\iota (T_xL)\in (\LG_X)_{\iota(x)},~x\in L.
\end{equation}
The inverse image of the subspace $\tt_L(L)\subseteq \iota^*\LG_X$ under the fiberwise covering map $\iota^*\LG'_X\ra \iota^*\LG_X$ is then a $\ZZ_{2n}$-local system on $L$, which we denote by $\ls_L$. We may regard $\ls_L$ as a $\CC^{\times}$-local system via the inclusion $\ZZ_{2n}\hookrightarrow \CC^{\times}:1~(\bmod{2n})\mapsto \zt:= e^{\frac{2\pi i}{2n}}$.

\begin{Def} Let $\LL=(L,\ls)$ be an object of $Fuk(X)$. Define
\[\ac(\LL):=(L,\ls\otimes\ls_L).\]
\end{Def}

\begin{Rem}\label{order=n} If $L$ is oriented, then $\tt_L$ has a lift in the fiberwise double cover of $\iota^*\LG_X$ which lies between $\iota^*\LG_X$ and $\iota^*\LG'_X$. It follows that the $\ZZ_{2n}$-local system $\ls_L$ is reduced to a $\ZZ_n$-local system, and hence $\ac^n(\LL)=\LL$.
\end{Rem}
%%%%%%%%%%%%%%%%%%%%%%%%%%%%%%%%%%%%%%%%%%%%%%%%%%%%%%%%%%%%%%%%%%%%%%
%%%%%%%%%%%%%%%%%%%%%%%%%%%%%%%%%%%%%%%%%%%%%%%%%%%%%%%%%%%%%%%%%%%%%%
\subsection{Morphism level}\label{mor level} The linear part $\ac_1$ of our twisted $A_{\infty}$ functor $\ac$ is of the form
\[\ac_1:Hom_{Fuk(X)}(\LL_0,\LL_1)\ra Hom_{Fuk(X)}(\ac(\LL_0),\ac(\LL_1)). \]
By \eqref{mor}, the morphism space between two objects $\LL_i=(L_i,\ls_i),~i=0,1$ of $Fuk(X)$ is given by 
\begin{equation}\label{hom1}
 Hom_{Fuk(X)}(\LL_0,\LL_1)=\bigoplus_{C\in \pi_0(L_0\times_{\iota} L_1)}\O(C; Hom(\ls_0|_C,\ls_1|_C)). 
\end{equation}
Then 
\begin{align}
&~ Hom_{Fuk(X)}(\ac(\LL_0),\ac(\LL_1))\nonumber\\
=&\bigoplus_{C\in \pi_0(L_0\times_{\iota} L_1)}\O(C; Hom((\ls_0\otimes \ls_{L_0})|_C,(\ls_1\otimes \ls_{L_1})|_C))\nonumber \\
\simeq &\bigoplus_{C\in \pi_0(L_0\times_{\iota} L_1)}\O(C; Hom(\ls_0|_C,\ls_1|_C)\otimes Hom(\ls_{L_0}|_C,\ls_{L_1}|_C)).\label{hom2}
\end{align}

Define the family version of the \textit{canonical short path} \cite{Auroux}, $\tt^C_t$, taking $\tt_{L_0}|_C$ to $\tt_{L_1}|_C$ through sections of $\iota^*\LG_X$ over $C$ as follows. Consider the symplectic vector bundle $V_C:=TC^{\perp\w}/TC$ defined on $C$ with the induced symplectic form $[\w]$. Its associated Lagrangian Grassmannian bundle $\LG_C:=LG(V_C,[\w])$ embeds canonically into $\LG_X|_C$ through the quotient map $TC^{\perp\w} \twoheadrightarrow V_C$. Notice that the images of $\tt_{L_0}|_C$ and $\tt_{L_1}|_C$ lie in $\LG_C$. Choose a compatible almost complex structure $J_C$ on $(V_C,[\w])$ such that $J_C\cdot TL_0/TC=TL_1/TC$. Then the desired path $\tt^C_t$ is defined by 
\begin{equation}\label{Lagpath}
\tt^C_t:= e^{-\frac{\pi t}{2} J_C}\cdot TL_0/TC,~t\in [0,1].
\end{equation}
\begin{Rem} It can be shown that $\tt^C_t$ is independent of $J_C$ up to homotopy.
\end{Rem}
The lift of $\tt^C_t$ with respect to the fiberwise covering map $\LG'_X|_C\ra \LG_X|_C$ gives an isomorphism $s_C:\ls_{L_0}|_C\ra \ls_{L_1}|_C$ of local systems and hence a flat section of $Hom(\ls_{L_0}|_C,\ls_{L_1}|_C)$ which is denoted by the same notation $s_C$.

By \eqref{hom1} and \eqref{hom2}, $s_C$ induces a chain isomorphism 
\[f_C: \left(\O(C; Hom(\ls_0|_C,\ls_1|_C)) , m_{1,0}\right)\ra \left(\O(C; Hom((\ls_0\otimes \ls_{L_0})|_C,(\ls_1\otimes \ls_{L_1})|_C)),m_{1,0}\right).\]

For later use, we denote by $C'$ the connected component $C$ regarded as an element of $\pi_0(L_1\times_{\iota} L_0)$, i.e. considered by interchanging $L_0$ and $L_1$. Define $s_{C'}$ and $f_{C'}$ similarly. Notice that in this case, we should start with the canonical short path taking $\tt_{L_1}|_C$ to $\tt_{L_0}|_C$. 

Before defining $\ac_1$, recall that our goal is not to define an $A_{\infty}$ functor but a twisted one. That means $\ac_1$ should not commute with $m_{1,0}$ exactly, but commute with it up to a twist. Hence it is natural to introduce the following operator
\begin{Def} Given a chain map $f:A^{\bigcdot}\ra B^{\bigcdot}$. Define $\tw{f}:A^{\bigcdot}\ra B^{\bigcdot}$ by 
\[\tw{f}(a) := \zt^{-r} f(a)\]
for any $a\in A^r,~r\in\ZZ$. Notice that $\tw{f}$ is no longer a chain map.
\end{Def}

\begin{Def} Define 
\[\ac_1 := \bigoplus_{C\in \pi_0(L_0\times_{\iota} L_1)}\tw{f_C}\]
and $\ac_k:=0$ for $k\geqslant 2$.
\end{Def}
%%%%%%%%%%%%%%%%%%%%%%%%%%%%%%%%%%%%%%%%%%%%%%%%%%%%%%%%%%%%%%%%%%%%%%
%%%%%%%%%%%%%%%%%%%%%%%%%%%%%%%%%%%%%%%%%%%%%%%%%%%%%%%%%%%%%%%%%%%%%%

\subsection{Proof of Theorem \ref{thm}}
It is clear from the construction that $\ac$ has order $2n$. Thus it remains to verify that $\ac$ satisfies the twisted $A_{\infty}$ equations
\begin{equation}\label{aooequation}
m_k\circ \ac_1^{\otimes k} = \zt^{2-k} \ac_1\circ m_k,~k\geqslant 0.
\end{equation}
Let $\overrightarrow{\LL}=(\LL_0,\ldots,\LL_k)$ be a sequence of $k+1$ objects of $Fuk(X)$. Put $\LL_{-1}:=\LL_k$. Suppose for each $i=0,\ldots,k$, $L_{i-1}$ and $L_i$ intersect cleanly. By \eqref{m_k}, the $A_{\infty}$ product map
\[m_k:\bigotimes_{i=1}^k Hom_{Fuk(X)}(\LL_{i-1},\LL_i)\ra Hom_{Fuk(X)}(\LL_0,\LL_k)\]
is equal to the sum $\ds\sum_{\overrightarrow{C},\b} T^{\energy{\b}} m_{k,\b,\overrightarrow{\LL},\overrightarrow{C}}$ where
\[m_{k,\b,\overrightarrow{\LL},\overrightarrow{C}}: \bigotimes_{i=1}^k \O(C_i; Hom(\ls_{i-1}|_{C_i} ,\ls_i|_{C_i}  ) )\ra \O(C'_0; Hom(\ls_0|_{C'_0} ,\ls_k|_{C'_0}  ) ) \] 
is a multilinear map, $\overrightarrow{C}=(C_0,\ldots,C_k)$ with $C_i\in\pi_0(L_{i-1}\times_{\iota} L_i)$, $\b\in \pi_2\left(X, \bigcup_{i=0}^k\iota(L_i)\right)$ for which $\Moduli\ne\emptyset$ and $E(\b)=\int_{\b}\w$ is the symplectic area.

Put $\ac(\overrightarrow{\LL}):=(\ac(\LL_0),\ldots,\ac(\LL_k))$ and fix $\overrightarrow{C}$. Equation \eqref{aooequation} will be verified if we show that 
\begin{equation}\label{1}
m_{k,\b,\ac(\overrightarrow{\LL}),\overrightarrow{C}}\circ \left(\bigotimes_{i=1}^k \tw{f_{C_i}}\right) =\zt^{2-k} \tw{f_{C_0'}}\circ m_{k,\b,\overrightarrow{\LL},\overrightarrow{C}}.
\end{equation}
Consider inputs $a_i\in\Omega^{r_i}(C_i, Hom(\ls_{i-1}|_{C_i},\ls_i|_{C_i}))$. Then $\tw{f_{C_i}}(a_i)=\zt^{-r_i} f_{C_i}(a_i)$.

\noindent\underline{\textit{Case $(k,\b)=(1,0)$}}

In this case, $\overrightarrow{\LL}=(\LL_0,\LL_1)$, $\overrightarrow{C}=(C_0,C_1)$, and it is necessary that $C_0'=C_1$. The LHS of \eqref{1} is equal to $\zt^{-r_1} m_{1,0,\ac(\overrightarrow{\LL}),\overrightarrow{C}}(f_{C_1}(a_1))$ and the RHS of \eqref{1} is equal to $\zt^{2-1}\cdot \zt^{-(r_1+1)} f_{C_0'}( m_{1,0,\overrightarrow{\LL},\overrightarrow{C}}(a_1))$.
 
Since $-r_1=2-1-(r_1+1)$, $C_0'=C_1$ and $f_{C_1}$ is a chain map, \eqref{1} holds.

\noindent\underline{\textit{Case $(k,\b)\ne(1,0)$}}

The LHS of \eqref{1} is equal to $\zt^{-\sum_{i=1}^k r_i} m_{k,\b,\ac(\overrightarrow{\LL}),\overrightarrow{C}}(f_{C_1}(a_1),\ldots,f_{C_k}(a_k))$ and the RHS of \eqref{1} is equal to $\zt^{2-k-r_0} f_{C_0'}(m_{k,\b,\overrightarrow{\LL},\overrightarrow{C}}(a_1,\ldots, a_k) ) $ where $r_0$ is the degree of $m_{k,\b,\overrightarrow{\LL},\overrightarrow{C}}(a_1,\ldots, a_k)$. By \ref{m_kb}, 
\[r_0=\left(\text{dim}(C_0) -\frac{1}{2}\text{dim}(X)\right) + 2- k-\mu(\b)+ \sum_{i=1}^k r_i.\]

In other words, we have to show
\[ m_{k,\b,\ac(\overrightarrow{\LL}),\overrightarrow{C}}(f_{C_1}(a_1),\ldots,f_{C_k}(a_k)) = \zt^{\mu(\b) -\left(\text{dim}(C_0) -\frac{1}{2}\text{dim}(X)\right) } f_{C_0'}(m_{k,\b,\overrightarrow{\LL},\overrightarrow{C}}(a_1,\ldots, a_k) ) .\]

Observe that the holonomy contribution of a $J$-holomorphic polygon $u\in\Moduli$ to $m_{k,\b,\ac(\overrightarrow{\LL}),\overrightarrow{C}}$ is equal to the tensor product of the holonomy contribution of the same polygon to $m_{k,\b,\overrightarrow{\LL},\overrightarrow{C}}$ and 
\[  PT(\tilde{u}_{[\xi_k,\xi_0]},\ls_{L_k})\circ \left(s_{C_k}\right)_{\tilde{u}(\xi_k)} \circ \cdots \circ \left(s_{C_1}\right)_{\tilde{u}(\xi_1)} \circ PT(\tilde{u}_{[\xi_0,\xi_1]},\ls_{L_0})\in Hom((\ls_{L_0})_{\tilde{u}(\xi_0^+)}, (\ls_{L_k})_{\tilde{u}(\xi_0^-)} )\]
where $PT(\g,\ls)$ is the parallel transport of the local system $\ls$ along the path $\g$ and 
\[\tilde{u}(\xi_i) := (\tilde{u}(\xi_i^-),\tilde{u}(\xi_i^+)) := \left( \lim_{\substack{\xi\ra\xi_i\\\xi\in(\xi_{i-1},\xi_i) }} \tilde{u}_{[\xi_{i-1},\xi_i]}(\xi) , \lim_{\substack{\xi\ra\xi_i\\\xi\in(\xi_i,\xi_{i+1}) }} \tilde{u}_{[\xi_i,\xi_{i+1}]}(\xi)   \right)\in C_i.\] 
The last expression is illustrated schematically as follows.

\begin{center}
\begin{tikzpicture}
\tikzset{haha/.style={decoration={markings,mark=at position 1 with %
    {\arrow[scale=2,>=to]{>}}},postaction={decorate}}}
\tikzmath{\x1 = 3; \x2 = 0.3; \x3=2; \x4=1.2; \x5=10; \x6=90; \x7 = 4; \y1=3; \y2=5;}
\draw (0,0) ellipse (\x1 and \x1);
\draw[thick] (0:\x1-\x2) -- (0:\x1+\x2);
\draw[thick] (60:\x1-\x2) -- (60:\x1+\x2);
\draw[thick] (-60:\x1-\x2) -- (-60:\x1+\x2);
\draw[thick] (120:\x1-\x2) -- (120:\x1+\x2);
\draw[thick] (240:\x1-\x2) -- (240:\x1+\x2);

%%%%%%%%%%%%%%%%%%%%%%%%%%%%%%%%%%%%%%%%%%%%%%%%%%%%%
\node at (0:\x1+\x3*\x2) {$\xi_0$};
\node at (60:\x1+\x3*\x2) {$\xi_1$};
\node at (120:\x1+\x3*\x2) {$\xi_2$};
\node at (-60:\x1+\x3*\x2) {$\xi_k$};
\node at (-120:\x1+\x3*\x2) {$\xi_{k-1}$};

%%%%%%%%%%%%%%%%%%%%%%%%%%%%%%%%%%%%%%%%%%%%%%%%%%%%%
\node at (30:\x1-\x4*\x2) {$L_0$};
\node at (90:\x1-\x4*\x2) {$L_1$};
\node at (-30:\x1-\x4*\x2) {$L_k$};
\node at (-90:\x1-\x4*\x2) {$L_{k-1}$};

%%%%%%%%%%%%%%%%%%%%%%%%%%%%%%%%%%%%%%%%%%%%%%%%%%%%%
\node at (0,0) {\Huge $u$};

\draw[black,fill=black]   (150:\x1+\x3*\x2)  circle (.1ex) ;
\draw[black,fill=black]   (165:\x1+\x3*\x2)  circle (.1ex) ;
\draw[black,fill=black]   (180:\x1+\x3*\x2)  circle (.1ex) ;
\draw[black,fill=black]   (195:\x1+\x3*\x2)  circle (.1ex) ;
\draw[black,fill=black]   (210:\x1+\x3*\x2)  circle (.1ex) ;

%%%%%%%%%%%%%%%%%%%%%%%%%%%%%%%%%%%%%%%%%%%%%%%%%%%%%
%[out=210,in=150]
\draw[thick, haha] (\x5:\x1+\x3*\x2) arc (\x5:60-\x5:\x1+\x3*\x2);
\draw[thick, haha] (60+\x5:\x1+\x3*\x2) arc (60+\x5:120-\x5:\x1+\x3*\x2);
\draw[thick, haha] (-120+\x5:\x1+\x3*\x2) arc (-120+\x5:-60-\x5:\x1+\x3*\x2);
\draw[thick, haha] (-60+\x5:\x1+\x3*\x2) arc (-60+\x5:-\x5:\x1+\x3*\x2);

\draw[thick] (60-\x5:\x1+\x3*\x2) to[out=60-\x5+90-\x6,in=60-90] (60:\x1+\x7*\x2);
\draw[thick, haha] (60:\x1+\x7*\x2) to[out=60+90,in=60+\x5-90+\x6] (60+\x5:\x1+\x3*\x2);

\draw[thick] (120-\x5:\x1+\x3*\x2) to[out=120-\x5+90-\x6,in=120-90] (120:\x1+\x7*\x2);
\draw[thick, haha] (120:\x1+\x7*\x2) to[out=120+90,in=120+\x5-90+\x6] (120+\x5:\x1+\x3*\x2);

\draw[thick] (-120-\x5:\x1+\x3*\x2)  to[out=-120-\x5+90-\x6,in=-120-90] (-120:\x1+\x7*\x2);
\draw[thick, haha] (-120:\x1+\x7*\x2) to[out=-120+90,in=-120+\x5-90+\x6] (-120+\x5:\x1+\x3*\x2);

\draw[thick] (-60-\x5:\x1+\x3*\x2) to[out=-60-\x5+90-\x6,in=-60-90] (-60:\x1+\x7*\x2);
\draw[thick, haha] (-60:\x1+\x7*\x2) to[out=-60+90,in=-60+\x5-90+\x6] (-60+\x5:\x1+\x3*\x2);

\draw[thick] (120+\x5:\x1+\x3*\x2) arc (120+\x5:140:\x1+\x3*\x2);
\draw[thick, haha] (-140:\x1+\x3*\x2) arc (-140:-120-\x5:\x1+\x3*\x2);

%%%%%%%%%%%%%%%%%%%%%%%%%%%%%%%%%%%%%%%%%%%%%%%%%%%%%
\node at (30:\x1+\y1*\x2) {\qquad \qquad  \tiny $~PT(\tilde{u}_{[\xi_0,\xi_1]},\ls_{L_0})$};
\node at (90:\x1+\y1*\x2) { \tiny  $PT(\tilde{u}_{[\xi_1,\xi_2]},\ls_{L_1})$};
\node at (-90:\x1+\y1*\x2) { \tiny  $PT(\tilde{u}_{[\xi_{k-1},\xi_k]},\ls_{L_{k-1}})$};
\node at (-30:\x1+\y1*\x2) {\qquad \qquad  \tiny  $~PT(\tilde{u}_{[\xi_k,\xi_0]},\ls_{L_k})$};

%%%%%%%%%%%%%%%%%%%%%%%%%%%%%%%%%%%%%%%%%%%%%%%%%%%%%
\node at (60:\x1+\y2*\x2) {$s_{C_1}$};
\node at (120:\x1+\y2*\x2) {$s_{C_2}$};
\node at (-60:\x1+\y2*\x2) {$s_{C_k}$};
\node at (-120:\x1+\y2*\x2) {$s_{C_{k-1}}$};
\end{tikzpicture}
\end{center}

Hence it suffices to show that this expression is equal to $\zt^{\mu(\b) -\left(\text{dim}(C_0) -\frac{1}{2}\text{dim}(X)\right) } s_{C'_0}$.

Since the concatenation of two canonical short paths in $\LG_{C_0}$, from $TL_k/TC_0$ to $TL_0/TC_0$ and from $TL_0/TC_0$ back to $TL_k/TC_0$, has Maslov index $-\text{dim}\left( TC_0^{\perp\w}/TC_0\right)=\text{dim}(C_0)-\frac{1}{2}\text{dim}(X)$, we have $s_{C_0'}\circ s_{C_0} = \zt^{\text{dim}(C_0) -\frac{1}{2}\text{dim}(X)} \text{id}_{\ls_{L_0}}$. Hence the last claim is equivalent to 
\begin{equation}\label{2}
s_{C_0}\circ  PT(\tilde{u}_{[\xi_k,\xi_0]},\ls_{L_k})\circ \left(s_{C_k}\right)_{\tilde{u}(\xi_k)} \circ \cdots \circ \left(s_{C_1}\right)_{\tilde{u}(\xi_1)} \circ PT(\tilde{u}_{[\xi_0,\xi_1]},\ls_{L_0}) = \zt^{\mu(\b)} \text{id}_{\ls_{L_0}}.
\end{equation}

To show \eqref{2}, notice that the domain of $u$ is contractible\footnote{In case $u$ contains sphere bubbles, we still regard the domain of $u$ as the punctured disk by contracting a finite collection of loops. The argument which follows will also work because Maslov index is additive with respect to cut-and-paste operation.}, and hence the bundle $u^*\LG'_X$ has a fiberwise infinite cover $\LG''\ra u^*\LG'_X$, i.e. its deck transformation group is isomorphic to $\ZZ$.

Consider the loop $\eta$ in $u^*\LG_X$ which is the concatenation of following paths (in the given order)
\[\tt_{L_0}(\tilde{u}_{[\xi_0,\xi_1]}), \tt^{C_1}_t(\tilde{u}(\xi_1)), \ldots, \tt^{C_k}_t(\tilde{u}(\xi_k)), \tt_{L_k}(\tilde{u}_{[\xi_k,\xi_0]}), \tt^{C_0}_t(\tilde{u}(\xi_0)). \]

Then the lifts of $\eta$ in $u^*\LG_M'$ and in $\LG''$ are paths whose end points are related by some group elements $a\in \ZZ_{2n}$ and $b\in\ZZ$ respectively. It is easy to see that $\zt^a=\zt^b$. By definition, $\zt^{a}\text{id}_{\ls_{L_0}}$ is equal to the LHS of \eqref{2} and $b=\mu(\b)$. This shows \eqref{2} and hence completes the proof of Theorem \ref{thm}.

\begin{Rem} The construction of the $A_{\infty}$ structure $m_k$ is usually supplied with some algebraic tools. For example, the perturbation theory of Kuranishi spaces only gives ``$m_{k,\b}$ modulo $T^E$'' \cite{Fu} or an ``$A_{N,K}$ structure'' \cite{AJ,FOOO}. In order to enhance them to an $A_{\infty}$ structure, algebraic arguments such as the homological perturbation and the approximate $A_{\infty}$ Whitehead's theorem are used. It is straightforward to keep track of these arguments and show that $\ac:Fuk(X)\ra Fuk(X)_{(\zt)}$ is an $A_{\infty}$ functor. The key point is that $\ac_1$ is a (twisted) chain isomorphism which allows us to transport the data used in the construction of the $A_{\infty}$ structure, such as the inputs for the homological perturbation and the homotopy inverses given by the approximate $A_{\infty}$ Whitehead's theorem, from the source $Fuk(X)$ to the target $Fuk(X)_{(\zt)}$ of $\ac$. Since we are using different data for the source and the target, we should consider $\ac$ only well-defined up to $A_{\infty}$ homotopy. In particular, $\ac^{2n}$ is only $A_{\infty}$ homotopic to the identity.
\end{Rem}

%%%%%%%%%%%%%%%%%%%%%%%%%%%%%%%%%%%%%%%%%%%%%%%%%%%%%%%%%%%%%%%%%%%%%%
%%%%%%%%%%%%%%%%%%%%%%%%%%%%%%%%%%%%%%%%%%%%%%%%%%%%%%%%%%%%%%%%%%%%%%
%%%%%%%%%%%%%%%%%%%%%%%%%%%%%%%%%%%%%%%%%%%%%%%%%%%%%%%%%%%%%%%%%%%%%%
%%%%%%%%%%%%%%%%%%%%%%%%%%%%%%%%%%%%%%%%%%%%%%%%%%%%%%%%%%%%%%%%%%%%%%
%%%%%%%%%%%%%%%%%%%%%%%%%%%%%%%%%%%%%%%%%%%%%%%%%%%%%%%%%%%%%%%%%%%%%%
\section{Action on \texorpdfstring{$({\cal M},W)$}{}} Define 
\begin{align*}
\L&:=\left\{ \left.\sum_{i=0}^{\infty} a_i T^{\l_i} \right| a_i\in\CC,~ \l_0\leqslant\l_1\leqslant\cdots,~\lim_{i\ra+\infty}\l_i=+\infty\right\}\\
\L_0&:=\left\{ \left.\sum_{i=0}^{\infty} a_i T^{\l_i} \right| a_i\in\CC,~ 0\leqslant\l_0\leqslant\l_1\leqslant\cdots,~\lim_{i\ra+\infty}\l_i=+\infty\right\}\\
\L_+&:=\left\{ \left.\sum_{i=0}^{\infty} a_i T^{\l_i} \right| a_i\in\CC,~ 0<\l_0\leqslant\l_1\leqslant\cdots,~\lim_{i\ra+\infty}\l_i=+\infty\right\}\\
U_{\L}&:=\CC\oplus\L_+.
\end{align*}
Define the valuation $val:\L\ra \RR\cup\{+\infty\}$ by $val(0):=+\infty$ and 
\[val\left(\sum_{i=0}^{\infty} a_i T^{\l_i}\right):=\l_{\min\{i|a_i\ne 0\}}. \]
Similarly, define $val:V\otimes_{\CC}\L\ra \RR$ for any $\CC$-vector space $V$.

We start by recalling the Fukaya's trick and the definition of ${\cal M}_{weak}(L)$. Details can be found in \cite{Futrick,Tu,Yuan}.
\subsection{The Fukaya's trick and \texorpdfstring{${\cal M}_{weak}(L)$}{}} \label{recall} 

Let $L$ be a compact, oriented, relatively spin Lagrangian of $X$. In what follows, unless otherwise specified, the coefficient ring of the de Rham complex $\O(L)$ and the cohomology $H^{\bigcdot}(L)$ is taken to be $\L_0$. In \cite{Futrick}, Fukaya constructed, for any $\w$-tame almost complex structure $J$ on $X$, a \textit{cyclic unital filtered} $A_{\infty}$ structure $m^J=(m^J_{k,\b})$ on $\O(L)$ which satisfies the open string analogue of the \textit{divisor axiom} originating from the closed string Gromov-Witten theory:
\begin{equation}\label{DA1}
\sum_{m_0+\cdots+m_k=m} m^J_{k+m,\b}(b^{\otimes m_0}, x_1,b^{\otimes m_1},\ldots,b^{\otimes m_{k-1}}, x_k,b^{\otimes m_k} ) = \dfrac{\langle \partial \b, b\rangle^m}{m!} m_{k,\b}^J(x_1,\ldots,x_k)
\end{equation}
for any $\b$ with $\overline{\cal M}_1(L,\b,J)\ne\emptyset$, $k,m\geqslant 0$ with $(k,m,\b)\ne (0,1,0)$; $b\in\Omega^1(L)$ and $x_1,\ldots,x_k\in \O(L)$.

Moreover, for any smooth family ${\cal J}=\{J_t\}_{t\in [0,1]}$ of $\w$-tame almost complex structures on $X$, the $A_{\infty}$ quasi-isomorphism $\fr^{\cal J}: (\O(L), m^{J_0}) \ra (\O(L), m^{J_1})$ induced by the associated pseudo-isotopy is cyclic, unital and satisfies the similar divisor axiom:
\begin{equation}\label{DA2}
\sum_{m_0+\cdots+m_k=m} \fr^{\cal J}_{k+m,\b}(b^{\otimes m_0},  x_1,b^{\otimes m_1},\ldots,b^{\otimes m_{k-1}}, x_k,b^{\otimes m_k} ) = \dfrac{\langle \partial \b, b\rangle^m}{m!} \fr^{\cal J}_{k,\b}(x_1,\ldots,x_k)
\end{equation}
for any $\b$ with $\overline{\cal M}_1(L,\b,J)\ne\emptyset$, $k,m\geqslant 0$ with $(k,m,\b)\ne (0,1,0)$; $b\in\Omega^1(L)$ and $x_1,\ldots,x_k\in \O(L)$.

Notice that $m^J_{1,0}$ is equal to the de Rham differential and $\fr^{\cal J}_{1,0}$ is equal to the identity.

By passing to the \textit{canonical model} via homological perturbation, the above story holds with $\O(L)$ replaced by $\H(L)$, except that $m^J_{1,0}$ is now equal to zero.

An immediate consequence of \eqref{DA1} is the following. Let $b\in H^1(L)$. Define $m^{J,b}=(m^{J,b}_{k,\b})$ where $m^{J,b}_{k,\b}:\H(L)^{\otimes k}\ra \H(L)$ is given by
\[ m^{J,b}_{k,\b}(x_1,\ldots,x_k):= \sum_{m=0}^{\infty}\sum_{m_0+\cdots+m_k=m} m^J_{k+m,\b}(b^{\otimes m_0},  x_1,b^{\otimes m_1},\ldots,b^{\otimes m_{k-1}}, x_k,b^{\otimes m_k}  ).\]
By \eqref{DA1}, $m^{J,b}_{k,\b}=e^{\langle\partial \b,b\rangle} m^J_{k,\b}$. In other words, $m^{J,b}$ is equal to $m^J$ twisted by the local system $\ls_b$ with holonomy $e^{\langle -,b\rangle}$. This allows us to identity the Lagrangian brane $(L,\ls_b)$ with the Lagrangian brane $(L,b)$ where the latter does not carry any local system but an element $b\in H^1(L)$ which will play the role of weak bounding cochain.

Take a basis $\{e_1,\ldots,e_{\ll}\}$ of $H^1(L;\ZZ)/\text{torsion}$. Then every element $b\in H^1(L)$ can be written uniquely as $b=x_1 e_1+\cdots + x_{\ll} e_{\ll}$ with $x_1,\ldots,x_{\ll}\in\L_0$. Put $y_i=e^{x_i}$. Then $y_1,\ldots, y_{\ll}$ are coordinates of the $\ll$-torus $H^1(L;\L^{\times})$ over the Novikov field\footnote{In order for $y_i=e^{x_i}$ to have meaning, it is necessary and sufficient that $y_i\in U_{\L}$. But for the purpose of extending the domain of definition of the function $P$ which will be introduced shortly, we regard each $y_i$ formally as an element of $\L^{\times}$.} ($\L^{\times}:=\L-\{0\}$).

Consider a formal power series $P$ on $H^1(L;\L^{\times})\oplus H^{\text{odd}>1}(L;\L)$ defined by 
\[P(y_1,\ldots,y_{\ll},b_{>1}):=\sum_{k,\b} T^{\energy{\b}} y_1^{\langle \partial \b, e_1\rangle} \cdots y_{\ll}^{\langle \partial \b, e_{\ll}\rangle} m^J_{k,\b}( (b_{>1})^{\otimes k}).\]

\begin{Th}\cite[Theorem 1.2]{Futrick}\label{Futrick} $P$ is convergent in 
\[{\cal V}_{\delta}=\{(y_1,\ldots,y_{\ll},b_{>1})|~val(y_1),\ldots,val(y_{\ll})\in (-\delta,\delta),~b_{>1}\in H^{\text{odd}>1}(L;\L_+)\}\]
where $\delta>0$ is a positive constant.
\end{Th}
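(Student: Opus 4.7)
Fix a point $(y_1,\ldots,y_{\ll},b_{>1})\in{\cal V}_{\delta}$ and write $v_i := val(y_i)\in(-\delta,\delta)$ and $\eta := val(b_{>1})>0$. Because the structure constants of $m^J_{k,\b}$ lie in $\L_0$, the valuation of a single summand of $P$ is bounded below by
\[ E(\b) + \sum_{i=1}^{\ll} v_i\langle\partial\b, e_i\rangle + k\eta.\]
Convergence of $P$ in the Novikov topology is equivalent to the claim that, for every $C\in\RR$, only finitely many pairs $(k,\b)$ with $\overline{\cal M}_1(L,\b,J)\ne\emptyset$ produce a value $\leqslant C$ above. The term $k\eta$ immediately forces $k\leqslant C/\eta$, so it suffices to show, for each fixed $k$, that only finitely many $\b$ satisfy
\[ E(\b) + \sum_{i=1}^{\ll} v_i \langle\partial \b, e_i\rangle \;\leqslant\; C. \qquad (\star)\]

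The plan is Fukaya's trick: reinterpret the left-hand side of $(\star)$ as a genuine $\w$-area on a nearby Lagrangian. Represent the basis $\{e_i\}$ by closed one-forms $\a_i$ on $L$. For $\max_i|v_i|<\delta$ sufficiently small, the graph of $\sum_i v_i\a_i$ inside a fixed Weinstein neighbourhood of $L$ defines an embedded Lagrangian $L'\subset X$ together with a Lagrangian isotopy $\{L_t\}_{t\in[0,1]}$ from $L$ to $L'$ whose flux class in $H^1(L;\RR)$ equals $\sum_i v_i e_i$; the constant $\delta$ of the theorem is precisely the one guaranteeing that this family stays in the chosen Weinstein neighbourhood. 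Extending the isotopy to an ambient diffeotopy $\{\varphi_t\}$, composition with $\varphi_1$ yields a bijection
\[ \overline{\cal M}_1(L,\b,J)\;\xrightarrow{\cong}\;\overline{\cal M}_1(L',\b',(\varphi_1)_*J),\]
where $\b'\in H_2(X,L')$ corresponds to $\b$ under the canonical identification $H_2(X,L)\cong H_2(X,L')$. A standard flux computation identifies the $\w$-area of the transported disk with
\[ E(\b) + \sum_{i=1}^{\ll} v_i\langle\partial\b, e_i\rangle.\]

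With this in hand, $(\star)$ becomes the assertion that the transported disk has $\w$-area at most $C$, for the compact Lagrangian $L'$ and the $\w$-tame almost complex structure $(\varphi_1)_*J$. Gromov compactness then yields only finitely many such classes $\b'$, and hence finitely many $\b$, establishing convergence of $P$ on ${\cal V}_{\delta}$.

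The principal technical obstacle is Fukaya's trick itself: constructing, for every admissible tuple $(v_i)$, a Lagrangian isotopy of the prescribed flux that keeps $L_t$ embedded within a fixed Weinstein neighbourhood of $L$, and verifying that moduli spaces are transported as claimed. The maximal flux that can be so realized is exactly what determines the value of $\delta$, hence the open domain ${\cal V}_{\delta}$ in the statement; once the isotopy is built, the finiteness is a uniform application of Gromov compactness.
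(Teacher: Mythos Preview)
Your proposal is correct and follows essentially the same route as the paper: both arguments move $L$ to a nearby Lagrangian $L(\alpha)$ via a diffeomorphism so that the quantity $E(\beta)+\sum_i v_i\langle\partial\beta,e_i\rangle$ becomes the genuine symplectic area $E(\beta')$ on $L(\alpha)$ with the pushed-forward (still $\omega$-tame) almost complex structure, and then invoke Gromov compactness. The only notable difference is packaging --- the paper rewrites the entire series $P$ in the transported variables and notes convergence, whereas you bound the valuation of each summand and count pairs $(k,\beta)$ directly --- and the paper is slightly more precise in saying the moduli spaces coincide \emph{as Kuranishi spaces} (so that the virtual counts defining $m^J_{k,\beta}$ genuinely agree), a point you gesture at but do not make explicit.
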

The proof is based on the following argument which is what the Fukaya's trick refers to. Consider a Weinstein neighbourhood $U$ of $L$. Then every small $\a=v_1e_1+\cdots+v_{\ll}e_{\ll}\in H^1(L;\RR)$, i.e. $v_1,\ldots, v_{\ll}$ are real numbers close enough to $0$, gives rise to a nearby Lagrangian $L(\a)$ lying inside $U$ which is expressed as the graph of a closed 1-form representing $\a$. Take a diffeomorphism $\diifeo_{\a}$ of $X$ such that $\diifeo_{\a}(L)=L(\a)$ and $(\diifeo_{\a})_*J:= d\diifeo_{\a}\circ J\circ (d\diifeo_{\a})^{-1}$ is $\w$-tame\footnote{The latter condition is satisfied if $\a$ is small enough.}. Write $\b':=(\diifeo_{\a})_*\b$, $e'_i=(\diifeo_{\a})_*e_i$, etc. Let $E(\b)$ denote the symplectic area $\int_{\b}\w$.

By the facts that $\energy{\b'}=\energy{\b}+\langle \partial\b,\a\rangle$ and that the moduli spaces $\overline{{\cal M}}_{k+1}(L,\b,J)$ and $\overline{{\cal M}}_{k+1}(L(\a),\b',(\diifeo_{\a})_*J)$ are identical as \textit{Kuranishi spaces} (see \cite{AJ, Futrick, FO, FOOO} for the definition), we have
\begin{align*}
P(y_1,\ldots,y_{\ll},b_{>1}) &= \sum_{k,\b'} T^{E(\b')} \cdot T^{-\langle\partial\b, v_1e_1+\cdots+v_{\ll}e_{\ll} \rangle}y_1^{\langle \partial \b', e'_1\rangle} \cdots y_{\ll}^{\langle \partial \b', e'_{\ll}\rangle}  m^{(\diifeo_{\a})_*J}_{k,\b'}( (b'_{>1})^{\otimes k})\\
&= \sum_{k,\b'} T^{E(\b')} \left(T^{-v_1}y_1\right)^{\langle \partial \b', e'_1\rangle} \cdots \left(T^{-v_{\ll}}y_{\ll}\right)^{\langle \partial \b', e'_{\ll}\rangle}  m^{(\diifeo_{\a})_*J}_{k,\b'}( (b'_{>1})^{\otimes k}).
\end{align*}
It follows that $P(y_1,\ldots,y_{\ll},b_{>1}) $ is convergent if $val(y_i)=v_i,~i=1,\ldots,\ll$, by Gromov compactness. 

\begin{Def} Define the local mirror of $L$ (with respect to $J$) to be
\[  {\cal M}_{weak}(L,m^J):=\{  \mb{b}=(y_1,\ldots,y_{\ll},b_{>1})\in{\cal V}_{\delta}|~P(\mb{b})=W(\mb{b})\cdot 1\text{ for some scalar }W(\mb{b})\}/_{\sim} \]
where $\sim$ is the gauge equivalence \cite{FOOO}.
\end{Def}

The proof of Theorem \ref{Futrick} suggests that ${\cal M}_{weak}(L,m^J)$ contains not only the weak bounding cochains (over $\L_+$) on $L$ but also those on all nearby Lagrangians (up to Hamiltonian isotopy). Therefore, for any two Lagrangians $L$ and $L'$ which are close to each other, their local mirrors overlap. More precisely, they contain weak bounding cochains (over $\L_+$) on all Lagrangians which are close to $L$ and $L'$ simultaneously. The gluing function defined on this overlapping region can be obtained as follows. 

Take a path $\{\diifeo^t_{L,L'}\}_{t\in [0,1]}$ of diffeomorphisms of $X$ such that $\diifeo^0_{L,L'}=\text{id}$, $\diifeo^1_{L,L'}(L)=L'$ and $(\diifeo^t_{L,L'})^{-1}_*J$ remains $\w$-tame for all $t$. The family ${\cal J}:= \{(\diifeo^t_{L,L'})^{-1}_*J\}_{t\in [0,1]}$ of $\w$-tame almost complex structures induces an $A_{\infty}$ quasi-isomorphism $\fr^{\cal J}$ satisfying \eqref{DA2}. Define 
\[\fr^{\cal J}_*: {\cal M}_{weak}(L,m^J) \dasharrow {\cal M}_{weak}(L, m^{ (\diifeo^1_{L,L'})^{-1}_*J})\]
by 
\[\fr^{\cal J}_*(\mb{b}) :=\left( y_1 e^{\langle \pr_1(f(\mb{b})), e_1^{\vee}\rangle},\ldots, y_{\ll} e^{\langle \pr_1(f(\mb{b})), e_{\ll}^{\vee}\rangle}, \pr_{\ne 1}(f(\mb{b}))\right) \]
where $\{e_1^{\vee},\ldots,e_{\ll}^{\vee}\}$ is the dual basis of $\{e_1,\ldots,e_{\ll}\}$, $\pr_1$ (resp. $\pr_{\ne 1}$) is the projection of $H^{\bigcdot}(L;\L)$ onto $H^1(L;\L)$ (resp. $\bigoplus_{d\ne 1}H^d(L;\L)$), and $f(\mb{b})=\sum_{k,\b}T^{\energy{\b}} y_1^{\langle \partial\b,e_1\rangle}\cdots y_{\ll}^{\langle \partial \b,e_{\ll}\rangle} \fr^{\cal J}_{k,\b}((b_{>1})^{\otimes k})$. Here the dash arrow indicates that $\fr_*^{\cal J}$ is defined only on an open subset of the domain due to the convergence issue which will be discussed shortly. 

On the other hand, the diffeomorphism $\diifeo^1_{L,L'}$ induces a bijection $\psi:  {\cal M}_{weak}(L,m^{(\diifeo^1_{L,L'})^{-1}_*J} ) \ra  {\cal M}_{weak}(L',m^J)$ given by 
\[ \psi(y_1,\ldots, y_{\ll}, b_{>1}) := \left( T^{-v_1} (\diifeo^1_{L,L'})_*y_1,\ldots,T^{-v_{\ll}} (\diifeo^1_{L,L'})_*y_{\ll} , (\diifeo^1_{L,L'})_*(b_{>1})\right)\]
where we regard $L'$ as the graph of a closed 1-form representing $v_1e_1+\cdots+ v_{\ll}e_{\ll}$. 

Then the desired gluing function $\Psi_{L,L'}$ is defined to be the composite
\[{\cal M}_{weak}(L,m^J) \xdashrightarrow{$~\fr^{\cal J}_*~$ }{} {\cal M}_{weak}(L,m^{ (\diifeo^1_{L,L'})^{-1}_*J})\xrightarrow{~\psi~} {\cal M}_{weak}(L',m^J). \]
It turns out that $\fr^{\cal J}_*$ is convergent at any point which corresponds to a third Lagrangian $L''$ which is close to $L$ and $L'$ simultaneously (i.e. the overlapping region). This is proved by a family version of the Fukaya's trick \cite{Tu}. The key point is to find a family $\{{\cal G}_t\}_{t\in [0,1]}$ of diffeomorphisms such that ${\cal G}_t(L)=L''$ for all $t$ and $\{({\cal G}_t)_*{\cal J}\}_{t\in [0,1]}$ is a family of $\w$-tame almost complex structures joining $(\diifeo_{\a})_*J$ and $(\diifeo_{\a'})_*J$ where $L''=\diifeo_{\a}(L)=\diifeo_{\a'}(L')$ and $\diifeo_{\a}$ (resp. $\diifeo_{\a'}$) are the diffeomorphisms used in the proof of Theorem \ref{Futrick} for $L$ (resp. $L'$). 

%%%%%%%%%%%%%%%%%%%%%%%%%%%%%%%%%%%%%%%%%%%%%%%%%%%%%%%%%%%%%%%%%%%%%%
%%%%%%%%%%%%%%%%%%%%%%%%%%%%%%%%%%%%%%%%%%%%%%%%%%%%%%%%%%%%%%%%%%%%%%
\subsection{Action on \texorpdfstring{$({\cal M}_{weak}(L),W)$}{}} Recall the $\ZZ_{2n}$-local system $\ls_L$ from Section \ref{object level}. Since $L$ is oriented, $\ls_L$ is reduced to a $\ZZ_n$-local system. See Remark \ref{order=n}. Hence $\ls_L$ is represented by an element $\g\in Hom(H_1(L;\ZZ),\CC^{\times})$ with the property that 
\[\g(\partial\b)=\zt^{\mu(\b)}\]
for any $\b\in \pi_2(X,L)$ where $\zt=e^{\frac{2\pi i}{2n}}$. See \eqref{2}. Write $\g_i:=\g(e_i^{\vee})$, $i=1,\ldots,\ll$.

Apply the operator $\tww$ from Section \ref{mor level} to $\text{id}_{H^{\bigcdot}(L;\L_+)}$. Recall it means
\[\tw{\text{id}}|_{H^d(L;\L_+)}=\zt^{-d}\text{id}_{H^d(L;\L_+)}.\]

\begin{Def} Define $\tau:{\cal V}_{\delta}\ra {\cal V}_{\delta}$ by
\[\tau(y_1,\ldots,y_{\ll},b_{>1}):= \left( \g_1y_1,\ldots, \g_{\ll} y_{\ll},\zt \tw{\text{id}}(b_{>1}) \right).\]
\end{Def}

\begin{Lemma} \label{actiononlocalmirror} If $\mb{b}\in {\cal M}_{weak}(L,m^J)$, then $\tau(\mb{b})\in {\cal M}_{weak}(L,m^J)$ and $W(\tau(\mb{b}))=\zt^2 W(\mb{b})$.
\end{Lemma}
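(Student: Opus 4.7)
The plan is to substitute $\tau(\mb{b})$ directly into the defining series $P$, extract a single power of $\zt$ from each $(k,\b)$-summand, and show that on the degree-$r$ component of $H^{\bigcdot}(L;\L)$ the map $\mb{b}\mapsto\tau(\mb{b})$ produces a \emph{uniform} rescaling by $\zt^{2-r}$. The weak Maurer-Cartan condition $P(\mb{b})=W(\mb{b})\cdot\mathbf{1}$ means $P(\mb{b})$ is concentrated in degree $0$, so such a rescaling immediately gives $P(\tau(\mb{b}))=\zt^2 W(\mb{b})\cdot\mathbf{1}$, yielding both conclusions at once. As a preliminary sanity check, $\tau$ preserves ${\cal V}_{\delta}$: the $\g_i$ are $(2n)$-th roots of unity, so $val(\g_i y_i)=val(y_i)$, while $\zt\tw{\text{id}}$ acts on each graded piece of $H^{\text{odd}>1}(L;\L_+)$ by a nonzero scalar, preserving the positivity of valuations. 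Convergence of $P(\tau(\mb{b}))$ then follows from Theorem \ref{Futrick}.

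For the core computation, fix $(k,\b)$ contributing to $P$ and decompose $b_{>1}=\sum_d b_d$ with $b_d\in H^d$. The substitution $y_i\mapsto\g_i y_i$ multiplies the monomial $\prod_i y_i^{\langle\partial\b,e_i\rangle}$ by
\begin{equation*}
\prod_i\g_i^{\langle\partial\b,e_i\rangle}=\g(\partial\b)=\zt^{\mu(\b)},
\end{equation*}
while the substitution $b_{>1}\mapsto\zt\tw{\text{id}}(b_{>1})$ rescales each input $b_{d_i}$ by $\zt^{1-d_i}$, so by multilinearity
\begin{equation*}
m^J_{k,\b}\bigl(\zt^{1-d_1}b_{d_1},\ldots,\zt^{1-d_k}b_{d_k}\bigr)=\zt^{k-\sum_id_i}\,m^J_{k,\b}(b_{d_1},\ldots,b_{d_k}).
\end{equation*}
Specialising the degree formula $r_0=(\dim(C_0)-\tfrac{1}{2}\dim(X))+2-k-\mu(\b)+\sum_i r_i$ cited in Section \ref{actonFuk} to $C_0=L$ gives $\deg m^J_{k,\b}(b_{d_1},\ldots,b_{d_k})=\sum_id_i+2-k-\mu(\b)$. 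Hence the degree-$r$ piece of this output is nonzero only when $\sum_id_i-k=r-2+\mu(\b)$, and on such tuples the two rescalings combine to
\begin{equation*}
\zt^{\mu(\b)}\cdot\zt^{k-\sum_id_i}=\zt^{\mu(\b)+(2-r-\mu(\b))}=\zt^{2-r},
\end{equation*}
a value independent of $k$, $\b$ and the individual $d_i$. Summing over $(k,\b)$ and over the decomposition of $b_{>1}$ gives $P(\tau(\mb{b}))_r=\zt^{2-r}P(\mb{b})_r$ for every $r$, which by the Maurer-Cartan hypothesis collapses to $P(\tau(\mb{b}))=\zt^2 W(\mb{b})\cdot\mathbf{1}$.

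The final point to address is that ${\cal M}_{weak}(L,m^J)$ is a \emph{quotient} by the gauge-equivalence relation, so I would verify that $\tau$ descends to this quotient. Gauge equivalences are generated by $A_\infty$ operations on $H^{\bigcdot}(L;\L)$, and the same degree-bookkeeping as above---together with the fact that the $A_\infty$ equations are respected up to the twist $\zt^{2-k}$ already established in Theorem \ref{thm}---shows that $\tau$ maps gauge-equivalent representatives to gauge-equivalent ones. The principal obstacle is the bookkeeping itself: the entire argument hinges on the cancellation of $\mu(\b)$ between the holonomy contribution $\g(\partial\b)=\zt^{\mu(\b)}$ and the Maslov term in the degree formula, so any sign or normalisation slip in the construction of $\ls_L$ from Section \ref{object level} would destroy the uniformity of the scaling factor $\zt^{2-r}$.
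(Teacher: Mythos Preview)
Your proof is correct and follows essentially the same approach as the paper: both substitute $\tau(\mb{b})$ into $P$, extract the factor $\zt^{\mu(\b)}$ from the holonomy $\g(\partial\b)$ and $\zt^{k-\sum d_i}$ from the inputs, and then use the degree formula for $m^J_{k,\b}$ to see that the combined factor on the degree-$r$ piece of the output is $\zt^{2-r}$ (the paper phrases this as $P(\tau(\mb{b}))=\zt^2\,\tw{\text{id}}(P(\mb{b}))$, which is the same statement). Your additional remarks on $\tau$ preserving ${\cal V}_\delta$ and descending to gauge equivalence classes are appropriate and match the paper's brief ``the proof that $\tau$ preserves gauge equivalences is similar.''
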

\begin{proof}
Write $b_{>1}=\sum b_i$ with $b_i\in H^i(L;\L_+)$. We have 
\begin{align*}
P(\mb{b})&=\sum_{k,\b}\sum_{i_1,\ldots, i_k} T^{\energy{\b}} y_1^{\langle \partial \b, e_1\rangle} \cdots y_{\ll}^{\langle \partial \b, e_{\ll}\rangle} m^J_{k,\b}(b_{i_1},\ldots, b_{i_{\ll}})\\
&= W(\mb{b})\cdot 1.
\end{align*}
On the other hand, 
\begin{align*}
&~P(\tau(\mb{b}))\\
=&\sum_{k,\b}\sum_{i_1,\ldots, i_k} T^{\energy{\b}} \left(\g_1 y_1\right)^{\langle \partial \b, e_1\rangle} \cdots \left(\g_{\ll}y_{\ll}\right)^{\langle \partial \b, e_{\ll}\rangle} \zt^{k-(i_1+\cdots +i_k)} m^J_{k,\b}(b_{i_1},\ldots, b_{i_{\ll}})\\
=&\sum_{k,\b}\sum_{i_1,\ldots, i_k} T^{\energy{\b}}  y_1^{\langle \partial \b, e_1\rangle} \cdots y_{\ll}^{\langle \partial \b, e_{\ll}\rangle} \g(\langle \partial \b,e_1\rangle e_1^{\vee}+\cdots+ \langle \partial\b,e_{\ll}\rangle e_{\ll}^{\vee} )\zt^{k-(i_1+\cdots+ i_k)} m^J_{k,\b}(b_{i_1},\ldots, b_{i_{\ll}})\\
=& \sum_{k,\b}\sum_{i_1,\ldots, i_k} T^{\energy{\b}}  y_1^{\langle \partial \b, e_1\rangle} \cdots y_{\ll}^{\langle \partial \b, e_{\ll}\rangle} \zt^{\mu(\b)+k-(i_1+\cdots+ i_k)} m^J_{k,\b}(b_{i_1},\ldots, b_{i_{\ll}})\\
=&~ \zt^2~ \sum_{k,\b}\sum_{i_1,\ldots, i_k} T^{\energy{\b}}  y_1^{\langle \partial \b, e_1\rangle} \cdots y_{\ll}^{\langle \partial \b, e_{\ll}\rangle} \zt^{-[i_1+\cdots +i_k+2-k-\mu(\b)]} m^J_{k,\b}(b_{i_1},\ldots, b_{i_{\ll}})\\
\end{align*}
Notice that $m^J_{k,\b}(b_{i_1},\ldots, b_{i_k})$ has degree $i_1+\cdots+i_k+2-k-\mu(\b)$, and hence 
\[P(\tau(\mb{b}))=\zt^2~\tw{\text{id}}(P(\mb{b}))=\zt^2~\tw{\text{id}}(W(\mb{b})\cdot 1)=\zt^2W(\mb{b}).\]
The proof that $\tau$ preserves gauge equivalences is similar.
\end{proof}
It is clear that $\tau^n=\text{id}$. We have proved
\begin{Prop}(=Proposition \ref{localaction}) \label{localaction3} There is a $\ZZ_n$-action on ${\cal M}_{weak}(L,m^J)$ with respect to which $W$ is equivariant.
\end{Prop}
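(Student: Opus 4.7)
The plan is to bootstrap from Lemma \ref{actiononlocalmirror}, which has already carried out the analytically nontrivial work: it shows that $\tau$ preserves ${\cal M}_{weak}(L,m^J)$ (including compatibility with gauge equivalence) and that $W(\tau(\mb{b}))=\zt^2 W(\mb{b})$. Since $\zt^2=e^{2\pi i/n}$, the equivariance statement in the Proposition follows at once from the Lemma, and the only remaining task is to verify that $\tau$ generates an action of $\ZZ_n$, rather than merely of some larger cyclic group.

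To check $\tau^n=\text{id}$ on ${\cal V}_\delta$, I would argue coordinate by coordinate. On the toric coordinates $y_i$, one has $\tau^n(y_i)=\g_i^n y_i$ with $\g_i=\g(e_i^\vee)$; because $L$ is oriented, Remark \ref{order=n} tells us that the $\ZZ_{2n}$-local system $\ls_L$ reduces to a $\ZZ_n$-local system, i.e.\ $\g$ takes values in the group $\mu_n$ of $n$-th roots of unity, so $\g_i^n=1$. On the higher-degree coordinate $b_{>1}\in H^{\text{odd}>1}(L;\L_+)$, decompose $b_{>1}=\sum_{i\text{ odd},\,i>1}b_i$ with $b_i\in H^i(L;\L_+)$; then by definition $\tau$ multiplies $b_i$ by $\zt^{1-i}$, and hence $\tau^n$ multiplies $b_i$ by $\zt^{n(1-i)}=(-1)^{1-i}$, which is $+1$ because $i$ odd forces $1-i$ to be even. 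Combining the two components, $\tau^n$ acts as the identity on ${\cal V}_\delta$, and therefore on ${\cal M}_{weak}(L,m^J)$.

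The only conceptual point, rather than a real obstacle, is the reduction from $\ZZ_{2n}$ to $\ZZ_n$ via orientability as recorded in Remark \ref{order=n}; without it, one would only conclude $\tau^{2n}=\text{id}$, mirroring the $\ZZ_{2n}$-action produced on $Fuk(X)$ in Theorem \ref{thm} but failing to give the sharper $\ZZ_n$-action claimed here. Once this reduction is granted, the Proposition is a direct combination of the Lemma with the elementary arithmetic verification above.
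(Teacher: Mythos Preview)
Your proposal is correct and follows exactly the paper's approach: the paper derives the Proposition from Lemma \ref{actiononlocalmirror} together with the single assertion ``It is clear that $\tau^n=\text{id}$,'' and you have simply spelled out that last assertion coordinate by coordinate. The verification you give (using Remark \ref{order=n} for the $y_i$ and the parity of $i$ for the $b_i$) is the intended one.
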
 
%%%%%%%%%%%%%%%%%%%%%%%%%%%%%%%%%%%%%%%%%%%%%%%%%%%%%%%%%%%%%%%%%%%%%%
%%%%%%%%%%%%%%%%%%%%%%%%%%%%%%%%%%%%%%%%%%%%%%%%%%%%%%%%%%%%%%%%%%%%%%
\subsection{Commute with wall-crossing} Recall $\Psi_{L,L'}$ is defined to be the composite
\[{\cal M}_{weak}(L,m^J) \xdashrightarrow{$~\fr^{\cal J}_*~$ }{} {\cal M}_{weak}(L,m^{ (\diifeo^1_{L,L'})^{-1}_*J})\xrightarrow{~\psi~} {\cal M}_{weak}(L',m^J). \]

By Proposition \ref{localaction3}, $\ZZ_n$ acts on the local mirrors ${\cal M}_{weak}(L,m^J)$, $ {\cal M}_{weak}(L,m^{ (\diifeo^1_{L,L'})^{-1}_*J})$ and ${\cal M}_{weak}(L',m^J)$ with respect to which $W$ is equivariant. 

\begin{Lemma} $\tau$ commutes with $\fr^{\cal J}_*$. 
\end{Lemma}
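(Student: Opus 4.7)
The plan is to mimic the computation in the proof of Lemma \ref{actiononlocalmirror}, applied to the $A_\infty$ quasi-isomorphism $\fr^{\cal J}$ rather than to the structure maps $m^J$. Since $\fr^{\cal J}$ satisfies the divisor axiom \eqref{DA2} in the same form as $m^J$ satisfies \eqref{DA1}, the series
\[
f(\mb{b}) \;=\; \sum_{k,\b} T^{\energy{\b}}\, y_1^{\langle \partial\b,e_1\rangle}\cdots y_{\ll}^{\langle \partial \b,e_{\ll}\rangle}\, \fr^{\cal J}_{k,\b}\bigl((b_{>1})^{\otimes k}\bigr)
\]
admits exactly the same manipulation as $P(\mb{b})$. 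The only bookkeeping difference is the degree of the output: since $\fr^{\cal J}_k$ is a morphism component rather than a product, $\fr^{\cal J}_{k,\b}(b_{i_1},\ldots,b_{i_k})$ sits in degree $i_1+\cdots+i_k+1-k-\mu(\b)$, one less than the degree of $m^J_{k,\b}(b_{i_1},\ldots,b_{i_k})$.

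First I would substitute $\tau(\mb{b}) = (\g_1 y_1,\ldots,\g_{\ll}y_{\ll},\zt\,\tw{\text{id}}(b_{>1}))$ into $f$. The factor $(\g_1 y_1)^{\langle\partial\b,e_1\rangle}\cdots(\g_{\ll}y_{\ll})^{\langle\partial\b,e_{\ll}\rangle}$ yields an extra $\g(\partial\b)=\zt^{\mu(\b)}$ as in Lemma \ref{actiononlocalmirror}, while the $k$-fold tensor of $\zt\,\tw{\text{id}}(b_{>1})$ produces $\zt^{k-(i_1+\cdots+i_k)}$. Combining with the degree identity above yields the key relation
\[
f(\tau(\mb{b})) \;=\; \zt\cdot\tw{\text{id}}\bigl(f(\mb{b})\bigr),
\]
the analog of the identity $P(\tau(\mb{b}))=\zt^2\,\tw{\text{id}}(P(\mb{b}))$ from the previous proof but with an overall factor $\zt$ instead of $\zt^2$, reflecting the one-unit shift in degree between $\fr$ and $m$.

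Next I would decompose $f(\mb{b})=\sum_d f(\mb{b})_d$ by cohomological degree and extract the components used to build $\fr^{\cal J}_*$. On the degree-one part, $\zt\,\tw{\text{id}}$ acts by $\zt\cdot\zt^{-1}=1$, so $\pr_1(f(\tau(\mb{b})))=\pr_1(f(\mb{b}))$; consequently the $y_i$-coordinate of $\fr^{\cal J}_*(\tau(\mb{b}))$ is
\[
(\g_i y_i)\,e^{\langle \pr_1(f(\tau(\mb{b}))),e_i^{\vee}\rangle} \;=\; \g_i\cdot\Bigl(y_i\, e^{\langle \pr_1(f(\mb{b})),e_i^{\vee}\rangle}\Bigr),
\]
which is exactly the $y_i$-coordinate of $\tau(\fr^{\cal J}_*(\mb{b}))$. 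On the complementary degrees we have $\pr_{\ne 1}(f(\tau(\mb{b})))=\zt\,\tw{\text{id}}(\pr_{\ne 1}(f(\mb{b})))$, matching the action of $\tau$ on the $b_{>1}$-coordinate. A parallel (and essentially identical) argument shows that $\tau$ also respects the gauge equivalence used in defining ${\cal M}_{weak}$, so the commutation descends to the quotients.

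The main obstacle is simply the careful accounting of degree shifts and ensuring that the extra factor of $\zt$ in $f(\tau(\mb{b}))=\zt\,\tw{\text{id}}(f(\mb{b}))$ is precisely what makes the degree-one projection come out unchanged; any other normalization convention for $\fr^{\cal J}_k$ would either break the $y_i$-formula or force an unwanted phase in $\pr_{\ne 1}$. Once this one-unit arithmetic is verified, the lemma follows directly, and one obtains Proposition \ref{commutewithwallcrossing} by composing with the diffeomorphism-induced bijection $\psi$, which is manifestly equivariant for $\tau$ since it only rescales coordinates and pushes forward cohomology classes.
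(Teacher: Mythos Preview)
Your proposal is correct and follows essentially the same route as the paper: both compute $f(\tau(\mb{b}))=\zt\,\tw{\text{id}}(f(\mb{b}))$ from the degree formula $\deg\fr^{\cal J}_{k,\b}(b_{i_1},\ldots,b_{i_k})=i_1+\cdots+i_k+1-k-\mu(\b)$, then read off the $y_i$- and $\pr_{\ne 1}$-components to conclude $\fr^{\cal J}_*\circ\tau=\tau\circ\fr^{\cal J}_*$. Your explicit remark that $\zt\,\tw{\text{id}}$ acts trivially on degree one (because $\zt\cdot\zt^{-1}=1$) is exactly the observation underlying the paper's use of ``$\tw{\text{id}}$ commutes with $\pr_1$ and $\pr_{\ne 1}$''.
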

\begin{proof} Write $b_{>1}=\sum b_i$ as before. We have
\begin{align*}
f(\tau(\mb{b}))&= \sum_{k,\b}T^{\energy{\b}} \left(\g_1 y_1\right)^{\langle \partial \b, e_1\rangle} \cdots \left(\g_{\ll}y_{\ll}\right)^{\langle \partial \b, e_{\ll}\rangle} \fr^{\cal J}_{k,\b}( (\zt~\tw{\text{id}}(b_{>1}))^{\otimes k})\\
&= \sum_{k,\b}\sum_{i_1,\ldots, i_k} T^{\energy{\b}} y_1^{\langle \partial \b, e_1\rangle} \cdots y_{\ll}^{\langle \partial \b, e_{\ll}\rangle} \zt^{\mu(\b)+k-(i_1+\cdots+i_k)}\fr^{\cal J}_{k,\b}(b_{i_1},\ldots,b_{i_k})\\
&= \zt~  \sum_{k,\b}\sum_{i_1,\ldots, i_k} T^{\energy{\b}} y_1^{\langle \partial \b, e_1\rangle} \cdots y_{\ll}^{\langle \partial \b, e_{\ll}\rangle} \zt^{-[ i_1+\cdots+i_k+1-k-\mu(\b) ]}\fr^{\cal J}_{k,\b}(b_{i_1},\ldots,b_{i_k}).\\
\end{align*}
Since $\fr^{\cal J}_{k,\b}(b_{i_1},\ldots,b_{i_k})$ has degree $i_1+\cdots+i_k+1-k-\mu(\b)$, we have 
\[f(\tau(\mb{b}))=\zt~\tw{\text{id}}(f(\mb{b})).\]

Then 
\begin{align*}
\fr^{\cal J}_*\circ \tau(\mb{b}) &= \left( \g_1 y_1 e^{\langle\pr_1(f(\tau(\mb{b}))),e_1^{\vee} \rangle} , \ldots, \g_{\ll} y_{\ll} e^{\langle\pr_1(f(\tau(\mb{b}))),e_{\ll}^{\vee} \rangle}, \pr_{\ne 1}(f(\tau(\mb{b})))\right)\\
&=\left( \g_1 y_1 e^{\langle\pr_1(f(\mb{b})),e_1^{\vee} \rangle} , \ldots, \g_{\ll} y_{\ll} e^{\langle\pr_1(f(\mb{b})),e_{\ll}^{\vee} \rangle}, \zt~\tw{\text{id}}(\pr_{\ne 1}(f(\mb{b})))\right)\\
&=\tau\circ \fr^{\cal J}_*(\mb{b}).
\end{align*}
(We have used the fact that $\tw{\text{id}}$ commutes with $\pr_1$ and $\pr_{\ne 1}$.)
\end{proof}

\begin{Lemma} $\tau$ commutes with $\psi$.
\end{Lemma}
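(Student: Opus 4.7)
The plan is to verify the equality $\tau\circ \psi = \psi\circ \tau$ by direct computation, reducing it to two observations: first, the pushforward $(\diifeo^1_{L,L'})_*: H^{\bigcdot}(L;\L)\ra H^{\bigcdot}(L';\L)$ preserves the cohomological grading, hence commutes with the degree-indexed operator $\zt\,\tw{\text{id}}$; and second, the holonomies $\g_i^L$ and $\g_i^{L'}$ defining the two $\ZZ_n$-actions agree once we identify the distinguished basis of $H^1(L';\ZZ)/\text{torsion}$ with $e_i' := (\diifeo^1_{L,L'})_* e_i$.

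First I would substitute the explicit definitions of $\psi$ and $\tau$ and compare $\tau^{L'}\circ \psi(\mb{b})$ with $\psi\circ \tau^{L}(\mb{b})$ coordinate by coordinate. Since $\zt\,\tw{\text{id}}$ acts on $H^d$ by the scalar $\zt^{1-d}$ and $(\diifeo^1_{L,L'})_*$ is grading-preserving, the two operators commute, which takes care of the $b_{>1}$-components of the two compositions. The $y$-components then agree if and only if $\g_i^{L}=\g_i^{L'}$ for every $i=1,\ldots,\ll$.

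The main substantive step, and the one I expect to be the principal obstacle, is precisely this identification of holonomies. To carry it out, I would use that, within the Weinstein neighbourhood, $L'$ is the graph of the closed $1$-form $\a := v_1 e_1+\cdots+v_{\ll} e_{\ll}$, so the family $L_s$ of graphs of $s\a$ for $s\in[0,1]$ is a Lagrangian isotopy from $L$ to $L'$, each $L_s$ being identified with $L$ via the cotangent projection. Composing the Gauss map of $L_s$ with this identification yields a continuous family of maps $L\ra \LG_X$ joining $\tt_L$ to $\tt_{L'}\circ (\diifeo^1_{L,L'}|_L)$, where $\diifeo^1_{L,L'}|_L$ may be arranged to agree with the cotangent projection up to isotopy (a replacement which does not affect the pushforward on cohomology). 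By the homotopy-lifting property of the $\ZZ_{2n}$-cover $\LG'_X \ra \LG_X$, this homotopy lifts and produces an isomorphism of $\ZZ_{2n}$-local systems $\ls_L \simeq (\diifeo^1_{L,L'}|_L)^*\ls_{L'}$ on $L$. Passing to holonomies around $e_i^{\vee}$ and using that pushforward preserves the duality pairing then gives $\g_i^{L}=\g^{L'}((\diifeo^1_{L,L'})_* e_i^{\vee})=\g^{L'}((e_i')^{\vee})=\g_i^{L'}$, which is the needed identity. With both observations in hand, the two compositions coincide entry by entry, proving the lemma.
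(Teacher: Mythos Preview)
Your proposal is correct and follows essentially the same approach as the paper. The paper's proof is a one-liner: it observes that for a Lagrangian isotopy $\iota_t:L\to X$, the pulled-back local systems $\iota_0^*\ls_{\iota_0(L)}$ and $\iota_1^*\ls_{\iota_1(L)}$ are isomorphic because both are restrictions of a single local system on $L\times[0,1]$. Your argument unpacks this same idea---constructing the isotopy via graphs of $s\alpha$ and invoking homotopy lifting for the cover $\LG'_X\to\LG_X$---and additionally makes explicit the coordinate-by-coordinate verification (grading preservation for $b_{>1}$, holonomy matching $\g_i^L=\g_i^{L'}$ for the $y_i$) that the paper leaves to the reader.
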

\begin{proof}
It follows from the fact that if $\iota_t:L\ra X$ is a Lagrangian isotopy, then the local systems $\iota_0^*\ls_{\iota_0(L)}$ and $\iota_1^*\ls_{\iota_1(L)}$ on $L$ are isomorphic, as they are isomorphic to a local system on $L\times [0,1]$ restricted to the slices $L\times \{0\}$ and $L\times \{1\}$ respectively. 
\end{proof}

Therefore, we have proved 
\begin{Prop}(=Proposition \ref{commutewithwallcrossing}) $\tau$ commutes with $\Psi_{L,L'}$.
\end{Prop}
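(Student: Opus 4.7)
The plan is to use the factorization $\Psi_{L,L'}=\psi\circ \fr^{\cal J}_*$ from Section \ref{recall} and to verify commutativity with $\tau$ for each factor separately. This reduces the proposition to two independent lemmas, one combinatorial/cohomological and one geometric.

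For the factor $\fr^{\cal J}_*$, the strategy is to compute $f(\tau(\mb{b}))$ directly from its series definition, exactly in parallel with the calculation in Lemma \ref{actiononlocalmirror} but with $m^J_{k,\b}$ replaced by $\fr^{\cal J}_{k,\b}$. The substitution $y_i\mapsto \g_iy_i$ contributes a factor $\g(\sum\langle\partial\b,e_i\rangle e_i^{\vee})=\g(\partial\b)=\zt^{\mu(\b)}$, using $\g_i=\g(e_i^{\vee})$ and the defining property $\g(\partial\b)=\zt^{\mu(\b)}$ inherited from the identity \eqref{2}. The substitution $b_{>1}\mapsto \zt\tw{\text{id}}(b_{>1})$ contributes $\zt^{k-(i_1+\cdots+i_k)}$ on the $(k,\b)$-term. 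Combining and invoking the degree identity $\deg\fr^{\cal J}_{k,\b}(b_{i_1},\ldots,b_{i_k})=i_1+\cdots+i_k+1-k-\mu(\b)$ (note the shift by $1$ rather than $2$, because $\fr^{\cal J}$ is an $A_\infty$ functor rather than product), the accumulated scalar reorganizes as $\zt\cdot \zt^{-\deg}$, yielding $f(\tau(\mb{b}))=\zt\,\tw{\text{id}}(f(\mb{b}))$. From here, using that $\tw{\text{id}}$ commutes with both projections $\pr_1$ and $\pr_{\ne 1}$, and that $\pr_1(f(\tau(\mb{b})))=\pr_1(f(\mb{b}))$ since $\pr_1$ lands in degree $1$ where $\zt\tw{\text{id}}$ acts as identity, one reads off $\fr^{\cal J}_*\circ\tau=\tau\circ\fr^{\cal J}_*$.

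For the factor $\psi$, what must be shown is that the diffeomorphism $\diifeo^1_{L,L'}$ intertwines the $\ZZ_n$-actions on ${\cal M}_{weak}(L, m^{(\diifeo^1_{L,L'})_*^{-1}J})$ and ${\cal M}_{weak}(L',m^J)$. Since $\tau$ is built only from the cohomological data carried by the local system $\ls_L$ (the monodromies $\g_i$) and the grading on $H^{\bigcdot}(L;\L)$ (via $\tw{\text{id}}$), and since $\diifeo^1_{L,L'}$ trivially respects the grading, it suffices to verify that the push-forward identifies $\ls_L$ with $\ls_{L'}$ up to isomorphism. This is a statement about Lagrangian isotopy invariance of $\ls_L$: given $\iota_t\colon L\to X$ with $\iota_0$ the original immersion and $\iota_1=\diifeo^1_{L,L'}\circ\iota_0$, the family Gauss map $\tt_{\iota_t(L)}$ is a section of the pull-back of $\LG_X$ to $L\times[0,1]$, whose fiberwise $\ZZ_{2n}$-lift in $\LG'_X$ is a local system on $L\times[0,1]$. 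Its restrictions to $L\times\{0\}$ and $L\times\{1\}$ are isomorphic by parallel transport along $[0,1]$, giving $\iota_0^*\ls_{\iota_0(L)}\simeq \iota_1^*\ls_{\iota_1(L)}$ and hence matching monodromies after transport. Therefore $\tau\circ\psi=\psi\circ\tau$.

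Combining the two lemmas gives $\Psi_{L,L'}\circ\tau=\psi\circ\fr^{\cal J}_*\circ\tau=\psi\circ\tau\circ\fr^{\cal J}_*=\tau\circ\psi\circ\fr^{\cal J}_*=\tau\circ\Psi_{L,L'}$ wherever both sides are defined, which is Proposition \ref{commutewithwallcrossing}. The main obstacle I anticipate is the bookkeeping of $\zt$-powers in the $\fr^{\cal J}_*$ step: one must keep the three sources of $\zt$ (from $\g(\partial\b)$, from the $b$-insertions, and from the overall shift by $\zt$ in the definition of $\tau$) straight and reconcile them using the correct degree formula for a functor rather than a product. Everything else is either formal or a direct appeal to the family Fukaya trick already used in Section \ref{recall}.
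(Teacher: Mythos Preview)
Your proposal is correct and follows essentially the same approach as the paper: the paper likewise splits $\Psi_{L,L'}=\psi\circ\fr^{\cal J}_*$, proves $f(\tau(\mb{b}))=\zt\,\tw{\text{id}}(f(\mb{b}))$ via the degree formula $\deg\fr^{\cal J}_{k,\b}=\sum i_j+1-k-\mu(\b)$ and the identity $\g(\partial\b)=\zt^{\mu(\b)}$, and handles $\psi$ by the isotopy-invariance of $\ls_L$ through the local system on $L\times[0,1]$. Your explicit remark that $\zt\,\tw{\text{id}}$ acts as the identity on degree~$1$ (hence $\pr_1(f(\tau(\mb b)))=\pr_1(f(\mb b))$) is the point the paper records by saying $\tw{\text{id}}$ commutes with $\pr_1$ and $\pr_{\ne 1}$.
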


%%%%%%%%%%%%%%%%%%%%%%%%%%%%%%%%%%%%%%%%%%%%%%%%%%%%%%%%%%%%%%%%%%%%%%
%%%%%%%%%%%%%%%%%%%%%%%%%%%%%%%%%%%%%%%%%%%%%%%%%%%%%%%%%%%%%%%%%%%%%%
%%%%%%%%%%%%%%%%%%%%%%%%%%%%%%%%%%%%%%%%%%%%%%%%%%%%%%%%%%%%%%%%%%%%%%
%%%%%%%%%%%%%%%%%%%%%%%%%%%%%%%%%%%%%%%%%%%%%%%%%%%%%%%%%%%%%%%%%%%%%%
%%%%%%%%%%%%%%%%%%%%%%%%%%%%%%%%%%%%%%%%%%%%%%%%%%%%%%%%%%%%%%%%%%%%%%
\section{Extending the action to the complete mirror}
Let $X,D$ and $(\check{X}^{\circ},W)$ be given as in the introduction. Recall that $X$ is K\"ahler, $D$ is an anticanonical divisor of $X$ and $(\check{X}^{\circ},W)$ is the uncompactified mirror obtained by gluing the local mirror charts of the smooth torus fibers of an SYZ fibration on $X-D$ following Fukaya's scheme. 

\noindent\textbf{Assumptions (B).}
\begin{itemize}
\item $\check{X}^{\circ}$ is an analytic variety over $\CC$.
\item $(\check{X}^{\circ},W)$ can be completed to $(\check{X},W)$ where $\check{X}$ is a normal affine analytic variety and $W:\check{X}\ra\CC$ is a holomorphic function. 
\item The complement $\check{X}-\check{X}^{\circ}$ is contained in a closed analytic subset $A$ which has codimension at least two. 
\item There is a $\ZZ_n$-action on $(\check{X}^{\circ},W)$, i.e. there is a biholomorphism $\tau:\check{X}^{\circ}\ra \check{X}^{\circ}$ such that $\tau^n=\text{id}$ and 
\[W(\tau\cdot x)=e^{\frac{2\pi i}{n}} W(x),~x\in \check{X}^{\circ}.\]
\end{itemize}
Notice that the last assumption is actually the outcome of Corollary \ref{cor}. As for the first three, we emphasize that they are reasonable if $X$ is Fano. For example, consider the complete SYZ mirror of the famous special Lagrangian torus fibration defined on $\CC P^2$ minus a line and a conic which is given by 
\begin{align*}
\check{X}&=\{(u,v)\in\CC^2|~uv\ne 1\}\\
W&= u+\frac{v^2}{uv-1}.
\end{align*}
The only point that the local mirror charts of the Clifford tori and Chekanov tori do not cover is $(0,0)$. (It is covered by the local mirror chart of the immersed 2-sphere.) This point has codimension two, and the desired action is given by $(u,v)\mapsto (\zt u, \zt^{-1} v)$ where $\zt=e^{\frac{2\pi i}{3}}$.

Back to the general case. 
\begin{Prop} Under the assumptions (B), the $\ZZ_n$-action on $(\check{X}^{\circ},W)$ extends to a unique $\ZZ_n$-action on $(\check{X},W)$.
\end{Prop}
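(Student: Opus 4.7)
The plan is to apply the second Riemann extension theorem (cited in the footnote of Remark \ref{Riem}) to extend $\tau$ coordinate-wise. Since $\check{X}$ is a normal affine analytic variety, we may embed it as a closed analytic subvariety $\check{X}\hookrightarrow \CC^N$, giving holomorphic coordinate functions $z_1,\ldots,z_N$ on $\check{X}$. Note that the assumption that $\check{X}-\check{X}^{\circ}\subseteq A$ has codimension at least two, combined with normality of $\check{X}$, places us exactly in the hypothesis of the theorem; in particular $\check{X}^{\circ}$ is dense in $\check{X}$.

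First I would construct a candidate extension. For each $i$, the function $\tau^*(z_i|_{\check{X}^{\circ}})$ is holomorphic on $\check{X}^{\circ}$; by the second Riemann extension theorem it extends uniquely to a holomorphic function $g_i$ on $\check{X}$. Assemble these into a holomorphic map
\[
\tilde{\tau}:\check{X}\ra \CC^N,\qquad x\mapsto (g_1(x),\ldots,g_N(x)).
\]
Next I would verify that $\tilde{\tau}$ actually lands in $\check{X}$: if $f_1,\ldots,f_m$ are (local) holomorphic equations cutting out $\check{X}\subset \CC^N$, then each composite $f_\alpha\circ \tilde{\tau}$ is a holomorphic function on $\check{X}$ that vanishes on the dense subset $\check{X}^{\circ}$ (because $\tilde{\tau}|_{\check{X}^{\circ}}=\tau$ takes values in $\check{X}^{\circ}\subseteq \check{X}$), hence vanishes identically on $\check{X}$ by the identity principle on the normal variety $\check{X}$.

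Then I would show $\tilde{\tau}$ is a biholomorphism of $\check{X}$ and that the extension preserves the group-theoretic and equivariance data. Applying the same procedure to $\tau^{-1}$ produces $\widetilde{\tau^{-1}}:\check{X}\ra \check{X}$, and both composites $\widetilde{\tau^{-1}}\circ \tilde{\tau}$ and $\tilde{\tau}\circ \widetilde{\tau^{-1}}$ coincide with the identity on the dense subset $\check{X}^{\circ}$, hence everywhere on $\check{X}$. The same density argument applied to the holomorphic equalities $\tilde{\tau}^n=\text{id}$ and
\[
W\circ \tilde{\tau}-e^{\frac{2\pi i}{n}}W=0
\]
(each being a pair of holomorphic functions on $\check{X}$ that agree on $\check{X}^{\circ}$) upgrades these relations from $\check{X}^{\circ}$ to $\check{X}$. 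Uniqueness of the extended action is immediate from the uniqueness clause of the second Riemann extension theorem applied to each $g_i$.

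The only delicate step is verifying that $\tilde{\tau}$ maps into $\check{X}$ rather than merely into the ambient $\CC^N$; this is where both normality of $\check{X}$ and the codimension-two bound on $\check{X}-\check{X}^{\circ}$ are essential, since without them the identity-principle argument for $f_\alpha\circ\tilde{\tau}$ would fail. Once this step is in place, the remaining verifications are formal consequences of density and uniqueness.
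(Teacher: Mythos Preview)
Your argument is correct and rests on the same two ingredients as the paper's proof: the second Riemann extension theorem on the normal variety $\check{X}$ and the fact that $\check{X}$ is affine. The paper packages the argument a little differently: instead of extending coordinate functions one by one and then verifying the relations $\tilde{\tau}^n=\text{id}$ and $W\circ\tilde{\tau}=e^{2\pi i/n}W$ by density, it first passes to the $\ZZ_n$-invariant open subset $V:=\bigcap_{i=0}^{n-1}\tau^i(\check{X}\setminus A)\subseteq\check{X}^{\circ}$, whose complement still has codimension at least two, so that Riemann extension gives a ring isomorphism $\CC[\check{X}]\simeq\CC[V]$; the $\ZZ_n$-action on $\CC[V]$ then transports in one stroke to a $\ZZ_n$-action on $\CC[\check{X}]$ and hence on the affine space $\check{X}$. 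Your coordinate-wise construction is a concrete unpacking of this same step, with the density checks playing the role that invariance of $V$ plays in the paper.
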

\begin{proof}
Let $U:=\check{X}-A$. Define
\[V:=\bigcap_{i=0}^{n-1} \tau^i(U).\]
Then $V$ is an open analytic subset of $\check{X}$ whose complement has codimension at least two. Moreover, $V$ is contained in $\check{X}^{\circ}$ and is invariant under the given $\ZZ_n$-action. It follows that $\ZZ_n$ acts on the ring $\CC[V]$ of holomorphic functions on $V$. But the inclusion $V\hookrightarrow\check{X}$ induces an isomorphism $\CC[\check{X}]\simeq\CC[V]$ by
\begin{Lemma} (The second Riemann extension theorem, see e.g. \cite[Chapter 7]{GR}) Every holomorphic function on $V$ extends to a unique holomorphic function on $\check{X}$. 
\end{Lemma}
It follows that $\ZZ_n$ acts on the ring $\CC[\check{X}]$ and hence on the space $\check{X}$, since $\check{X}$ is affine. It is clear that this action is the unique extension of the given one on $\check{X}^{\circ}$ and $W$ is equivariant in the above sense.
\end{proof}
%%%%%%%%%%%%%%%%%%%%%%%%%%%%%%%%%%%%%%%%%%%%%%%%%%%%%%%%%%%%%%%%%%%%%%
%%%%%%%%%%%%%%%%%%%%%%%%%%%%%%%%%%%%%%%%%%%%%%%%%%%%%%%%%%%%%%%%%%%%%%
%%%%%%%%%%%%%%%%%%%%%%%%%%%%%%%%%%%%%%%%%%%%%%%%%%%%%%%%%%%%%%%%%%%%%%
%%%%%%%%%%%%%%%%%%%%%%%%%%%%%%%%%%%%%%%%%%%%%%%%%%%%%%%%%%%%%%%%%%%%%%
%%%%%%%%%%%%%%%%%%%%%%%%%%%%%%%%%%%%%%%%%%%%%%%%%%%%%%%%%%%%%%%%%%%%%%
\appendix
\section{\texorpdfstring{$Fuk(X)$}{}}\label{Fuk(X)}
The objects of $Fuk(X)$ consist of $\LL=(L,\ls)$\footnote{A relative spin structure on $L$ is also included as part of the data. But since it is not relevant for our construction, we drop it from the discussion.} where $L$ is taken from a fixed finite collection of immersed compact oriented Lagrangians of $X$ with clean self-intersection and $\ls$ is a $\CC^{\times}$-local system on $L$.

For any pair $L_0, L_1$ of such Lagrangians, let $\iota: L_i\ra X,~i=0,1$ denote the immersion. Suppose $L_0$ and $L_1$ intersect cleanly. Recall it means the fiber product
\[L_0\times_{\iota} L_1:=\{ (x,y)\in L_0\times L_1|~\iota(x)=\iota(y)\}\]
is smooth and satisfies 
\[ T_{(x,y)}(L_0\times_{\iota} L_1)=T_xL_0\times_{d\iota} T_y L_1\]
for any $(x,y)\in L_0\times_{\iota} L_1$.

Define the morphism space between two objects $\LL_i=(L_i,\ls_i),~i=0,1$ by
\begin{equation}\label{mor}
Hom_{Fuk(X)}(\LL_0,\LL_1):=\bigoplus_{C\in \pi_0(L_0\times_{\iota} L_1)} \O(C;Hom(\ls_0|_C,\ls_1|_C))
\end{equation}
where $\O(C;\ls)$ is any of the standard models (de Rham, singular cochain, etc) of the cohomology group $H^{\bigcdot}(C;\ls)$ with local coefficient $\ls$\footnote{Strictly speaking, the local system $Hom(\ls_0|_C,\ls_1|_C)$ in \eqref{mor} has to be twisted by a $\ZZ_2$-local system which is used to orient the moduli spaces of holomorphic disks. Since our $\ac$ will not modify it, we drop it from the discussion.}.

The $A_{\infty}$ structure on $Fuk(X)$ is defined by making sense of the slogan ``counting holomorphic polygons''. Let $\overrightarrow{\LL}=(\LL_0,\ldots,\LL_k)$ be a sequence of $k+1$ objects of $Fuk(X)$ such that $L_{i-1}$ and $L_i$ intersect cleanly for each $i=0,\ldots,k$. (Here $\LL_{-1}=\LL_k$.) Let $\overrightarrow{L}=(L_0,\ldots,L_k)$. For each $i=0,\ldots, k$, fix a connected component $C_i\in\pi_0(L_{i-1}\times_{\iota} L_i)$. Let $C'_0$ denote the connected component $C_0$ regarded as an element of $\pi_0(L_0\times_{\iota}L_k)$. Fix a homotopy class $\b\in \pi_2\left(X, \bigcup_{i=0}^k\iota(L_i)\right)$ and an $\w$-tame almost complex structure $J$ on $X$. 

The domain $\Sigma$ of holomorphic polygons which will be considered are bordered Riemann surfaces of genus zero with boundary marked points $\xi_0,\ldots,\xi_k,\xi_{k+1}=\xi_0$ arranged counterclockwise. For each pair of consecutive marked points $\xi_i$ and $\xi_{i+1}$, we denote by $[\xi_i,\xi_{i+1}]$ the arc in $\partial\Sigma$ drawn from $\xi_i$ to $\xi_{i+1}$ counterclockwise.

Define $\Moduli$ to be the moduli space of $J$-holomorphic polygons $u$ which represents $\b$ and has continuous lifts $\tilde{u}_{[\xi_i,\xi_{i+1}]}, i=0,\ldots, k$ into $L_i$ along the arc $[\xi_i,\xi_{i+1}]$ such that for each $i$
\[\tilde{u}(\xi_i) := (\tilde{u}(\xi_i^-),\tilde{u}(\xi_i^+)) := \left( \lim_{\substack{\xi\ra\xi_i\\\xi\in(\xi_{i-1},\xi_i) }} \tilde{u}_{[\xi_{i-1},\xi_i]}(\xi) , \lim_{\substack{\xi\ra\xi_i\\\xi\in(\xi_i,\xi_{i+1}) }} \tilde{u}_{[\xi_i,\xi_{i+1}]}(\xi)   \right)\in C_i.\] 

The virtual dimension of $\Moduli$ is equal to $\frac{1}{2}\text{dim}(X)+\mu(\b)+k-2$ where $\mu(\b)$ is the Maslov index of $\b$ which is defined in the standard way. See \cite{AJ}. Notice that in the presence of corners, $\mu(\b)$ depends on an assignment to each marked point $\zt_i$ a path in the Lagrangian Grassmannian $LG(T_{u(\xi_i)}X,\w_{u(\xi_i)} )$ joining the two limiting Lagrangian subspaces at $u(\xi_i)$ (from the left and from the right) determined by a representative $u$. In our case, we have chosen the canonical short path \eqref{Lagpath} from Section \ref{mor level}.

By performing abstract or rigid count of elements of $\Moduli$, weighted by the holonomy of $\ls_i$'s along their boundary arcs, one obtains a multilinear map
\begin{equation}\label{m_kb}
m_{k,\b,\overrightarrow{\LL},\overrightarrow{C}}: \bigotimes_{i=1}^k \O(C_i; Hom(\ls_{i-1}|_{C_i} ,\ls_i|_{C_i}  ) )\ra \O(C'_0; Hom(\ls_0|_{C'_0} ,\ls_k|_{C'_0}  ) ) 
\end{equation}
of degree $\left(\text{dim}(C_0)-\frac{1}{2}\text{dim}(X)\right)+2-k-\mu(\b)$. (The degree can be seen from the dimension of $\Moduli$ given above.)

The $A_{\infty}$ product map 
\[m_k:\bigotimes_{i=1}^k Hom_{Fuk(X)}(\LL_{i-1},\LL_i)\ra Hom_{Fuk(X)}(\LL_0,\LL_k)\]
is then defined to be 
\begin{equation}\label{m_k}
m_k:=\sum_{\overrightarrow{C},\b} T^{\energy{\b}} m_{k,\b,\overrightarrow{\LL},\overrightarrow{C}}
\end{equation}
over all $\overrightarrow{C}=(C_0,\ldots,C_k)$ with $C_i\in \pi_0(L_{i-1}\times_{\iota}L_i)$ and $\b\in\pi_2\left(X, \bigcup_{i=0}^k\iota(L_i)\right)$ for which $\Moduli\ne \emptyset$. (Here $E(\b)=\int_{\b}\w$ is the symplectic area.)

In general, $Fuk(X)$ is defined over the Novikov ring $\L_0$.

%%%%%%%%%%%%%%%%%%%%%%%%%%%%%%%%%%%%%%%%%%%%%%%%%%%%%%%%%%%%%%%%%%%%%%
%%%%%%%%%%%%%%%%%%%%%%%%%%%%%%%%%%%%%%%%%%%%%%%%%%%%%%%%%%%%%%%%%%%%%%
%%%%%%%%%%%%%%%%%%%%%%%%%%%%%%%%%%%%%%%%%%%%%%%%%%%%%%%%%%%%%%%%%%%%%%
%%%%%%%%%%%%%%%%%%%%%%%%%%%%%%%%%%%%%%%%%%%%%%%%%%%%%%%%%%%%%%%%%%%%%%
%%%%%%%%%%%%%%%%%%%%%%%%%%%%%%%%%%%%%%%%%%%%%%%%%%%%%%%%%%%%%%%%%%%%%%
%\begingroup
\setstretch{1.2}

%\endgroup
\end{document}